\newcommand{\remark}[1][]{\noindent \emph{Remark. }}
\newcommand{\example}[1][]{\noindent \emph{Example. }}
\newcommand{\note}[1][]{\noindent \emph{Note. }}
\newtheorem{theorem}{Theorem}
\newtheorem{defi}{Definition}
\newtheorem{problem}{Problem}
\newtheorem{conj}{Conjecture}
\newtheorem{corollary}{Corollary}
\DeclareMathOperator{\lin}{lin}
\journal{Linear Algebra and its Applications}
\begin{document}

\begin{frontmatter}

\title{Isometries of Minkowski geometries.}

\author{\'Akos G.Horv\'ath}

\address{\'A. G.Horv\'ath, Dept. of Geometry, Budapest University of Technology,
Egry J\'ozsef u. 1., Budapest, Hungary, 1111}

\ead{ghorvath@math.bme.hu}

\begin{abstract}
In this paper we review the known facts on isometries of  Minkowski geometries and prove some new results on them. We give the normal forms of two special classes of operators and also characterize the isometry group of Minkowski $3$-spaces in which the unit sphere does not contain an ellipse.
\end{abstract}

\begin{keyword}
adjoint abelian operator \sep Banach space \sep Minkowski geometry \sep semi-inner product \sep isometry group

\MSC[2008] 47A65 \sep 47B99 \sep 52A10 \sep 52A21
\end{keyword}

\end{frontmatter}

\section{Introduction}
The one hundred year old concept of ``Minkowski space" is a nice topic of recent geometric research. Nevertheless, the phrase ``Minkowski space" is applied for two different theories: the theory of normed linear spaces and the theory of linear spaces with indefinite metric. It is interesting (see \cite{gho 1},\cite{gho 2},\cite{gho 3}) that these essentially distinct theories have similar axiomatic foundations. The axiomatic build-up of the theory of linear spaces with indefinite metric comes from H. Minkowski \cite{minkowski} and the similar system of axioms of normed linear spaces was introduced by Lumer much later in \cite{lumer}. The first concept widely used in physics is the mathematical structure of relativity theory and thus its importance is without doubt. On the other hand, the importance of the second theory is based on the fact that a large part of modern functional analysis works in so-called normed spaces which are more general ones than inner product (or Hilbert) spaces.  This motivates the introduction of the so-called semi-inner product which is an important tool of the corresponding investigations. Of course, in both of these two theories a lot of problems can be formulated or can be solved in the language of geometry. Our theme of interest is the theory of finite-dimensional, separable and real semi-inner product spaces. Such a normed space with the branches of its geometric properties is called \emph{Minkowski geometry}.

Our purpose is to review the possible characterizations of the distinct transformation groups of Minkowski geometry, take into consideration the analytic theory and also the synthetic geometric-algebraic investigations. Through the paper we prove some new statements. We mention Theorem \ref{nfaa} and Theorem \ref{nfi} which introduce  normal forms for the adjoint abelian operators and isometries of a Minkowski $n$-space. Theorem \ref{igtne} describes the isometry group of a Minkowski $3$-space with the property that its unit sphere does not contain an ellipse. This latter result generalizes a theorem of H.Martini, M. Spirova and K. Strambach proved for non-Euclidean Minkowski planes.

\section{Operator theory of Minkowski geometry}

A generalization of inner product and inner product spaces was raised by G. Lumer in \cite{lumer}.
\begin{defi}[\cite{lumer}]
The semi-inner product (s.i.p) on a complex vector space $V$ is a complex function $[x,y]:V\times V\longrightarrow \mathbb{C}$ with the following properties:
\begin{description}
\item[s1]: $[x+y,z]=[x,z]+[y,z]$,
\item[s2]: $[\lambda x,y]=\lambda[x,y]$ \mbox{ for every } $\lambda \in \mathbb{C}$,
\item[s3]: $[x,x]>0$ \mbox{ when } $x\not =0$,
\item[s4]: $|[x,y]|^2\leq [x,x][y,y]$,
\end{description}
A vector space $V$ with a s.i.p. is an s.i.p. space.
\end{defi}

G. Lumer proved that an s.i.p space is a normed vector space with norm $\|x\|=\sqrt{[x,x]}$ and, on the other hand, that every normed vector space can be represented as an s.i.p. space. In \cite{giles} J. R. Giles showed that all normed vector spaces can be represented as s.i.p. spaces with homogeneous second variable. Giles also introduced the concept of \emph{ continuous s.i.p. space} as an s.i.p. space having the additional property:
For any unit vectors $x,y \in S$, $\Re\{[y,x+\lambda y]\}\rightarrow\Re\{[y,x]\}$ for all real $\lambda\rightarrow 0$.
Giles proved in \cite{giles} that an s.i.p. space is a continuous (uniformly continuous) s.i.p. space if and only if the norm is G\^{a}teaux (uniformly Fr\`{e}chet) differentiable. Giles also proved a variation of the Riesz representation theorem.

\subsection{Self-adjoint operators and the generalized adjoint}

Without using the concept of semi-inner product, on the "self-adjoint" property of a linear operator of a real normed space can be said on the basis of the concept of \emph{norm derivative} (see in \cite{alsina}, \cite{dragomir} or in \cite{wojcik}).
If $(V,\|\cdot\|)$ is a real normed space, the functions
$$
\rho_{\pm}'(x,y):=\lim\limits_{t\to \pm 0}\frac{\|x+ty\|^2-\|x\|^2}{2t}=\|x\|\cdot\lim\limits_{t\to \pm 0}\frac{\|x+ty\|-\|x\|}{t}
$$
are called \emph{norm derivatives}. Their properties are
\begin{itemize}
\item $\forall x,y\in V \quad \forall \alpha\in \mathbb{R}  \quad \rho_{\pm}'(x,\alpha x+y)=\alpha \|x\|^2+\rho_{\pm}'(x,y)$;
\item $\forall x,y\in V \quad \forall \alpha\in \mathbb{R}^+ \quad \rho_{\pm}'(\alpha x,y)=\alpha \rho_{\pm}'(x,y)=\rho_{\pm}'(x,\alpha y)$;
\item $\forall x,y\in V \quad \forall \alpha\in \mathbb{R}^- \quad \rho_{\pm}'(\alpha x,y)=\alpha \rho_{\mp}'(x,y)=\rho_{\pm}'(x,\alpha y)$;
\item $\forall x\in V \quad \rho_{\pm}'(\alpha x,x)=\|x\|^2$;
\item $\forall x,y\in V \quad |\rho_{\pm}'(x,y)|\leq \|x\|\|y\|$;
\item $\forall x,y\in V \quad \rho_{-}'(x,y)\leq \rho_{+}'(x,y)$;
\item $\forall x,y,z\in V \quad \rho_{+}'(x,y+z)\leq \rho_{+}'(x,y)+\rho_{+}'(x,z)$;
\item $\forall x,y,z\in V \quad \rho_{-}'(x,y+z)\geq \rho_{-}'(x,y)+\rho_{-}'(x,z)$;
\item $\rho_{\pm}'(x,y)$ continuous with respect to the second variable, but not necessarily with respect to its first one;
\item if $(V,[\cdot,\cdot])$ is a smooth semi inner product space then $[y,x]=\rho_{+}'(x,y)=\rho_{-}'(x,y)$ (implying also that $\rho_{+}'(x,y)$ is linear in its second argument.
\end{itemize}
Thus the concept of norm derivatives does not differ from the concept of the s.i.p. in smooth spaces. In a non-smooth space the set of smooth points is denoted by $D_{sm}(V)$. From Mazur's theorem (see \cite{mazur}) immediately follows that in a separable real Banach space $(V,\|\cdot\|)$ the set $D_{sm}(V)$ is dense. In this context, \emph{self-adjoint operator} means a linear operator $A$ that satisfies the property $\forall x,y\in V \quad \rho_{+}'(A(x),y)=\rho_{+}'(x,A(y))$.
In \cite{wojcik} we can see the following theorem:
\begin{theorem}[\cite{wojcik}]
Let $(V,\|\cdot\|)$ be a separable real Banach space. The mapping $f:V\longrightarrow V$ satisfies the functional equation
$$
\forall x,y\in V \quad \rho_{+}'(f(x),y)=\rho_{+}'(x,f(y))
$$
if and only if $f$ is a self-adjoint operator of $V$. (Hence it is linear and continuous.)
\end{theorem}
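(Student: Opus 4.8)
\textit{Approach.} The ``if'' direction is immediate, since by definition a self-adjoint operator is linear and satisfies the displayed functional equation. For the converse I would derive linearity and continuity of $f$ from the functional equation alone, the heart of the matter being additivity.

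I would first record an elementary \emph{uniqueness lemma}: if $\rho_+'(z,a)=\rho_+'(z,b)$ for every $z\in V$, then $a=b$. This follows by taking $z=a-b$ in the identity $\rho_+'(x,\alpha x+y)=\alpha\|x\|^2+\rho_+'(x,y)$ (with $x=a-b$, $\alpha=1$, $y=b$): one gets $\rho_+'(a-b,a)=\|a-b\|^2+\rho_+'(a-b,b)$, hence $\|a-b\|^2=0$. Reading the functional equation as $\rho_+'(z,f(y))=\rho_+'(f(z),y)$ for all $z,y$, it follows that $f(y)$ is the unique vector with that property, and positive homogeneity is then immediate: for $\lambda>0$ and every $z$,
\[
\rho_+'(z,f(\lambda x))=\rho_+'(f(z),\lambda x)=\lambda\rho_+'(f(z),x)=\lambda\rho_+'(z,f(x))=\rho_+'(z,\lambda f(x)),
\]
so $f(\lambda x)=\lambda f(x)$.

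The key ingredient I would isolate is a \emph{density lemma}: if $u\in V$ and $\rho_+'(z,u)\le 0$ for every smooth point $z$, then $u=0$. Indeed, assume $u\neq 0$; since $V$ is separable, $D_{sm}(V)$ is dense, so choose smooth points $z_n\to u$. At a smooth point the norm is G\^{a}teaux differentiable, hence $\rho_+'(z_n,\cdot)=\|z_n\|\,\varphi_n$ for a linear functional $\varphi_n$ with $\|\varphi_n\|=1$ and $\varphi_n(z_n)=\|z_n\|$; then $\varphi_n(u)=\|z_n\|+\varphi_n(u-z_n)\ge\|z_n\|-\|u-z_n\|\to\|u\|>0$, so $\rho_+'(z_n,u)>0$ for large $n$, a contradiction. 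Granting this, additivity follows: fixing $x,y$ and putting $w=f(x+y)-f(x)-f(y)$, for every smooth $z$ the linearity of $\rho_+'(z,\cdot)$, the functional equation, and the subadditivity of $\rho_+'$ in its second slot give
\[
\rho_+'(z,w)=\rho_+'(f(z),x+y)-\rho_+'(f(z),x)-\rho_+'(f(z),y)\le 0 ,
\]
whence $w=0$ by the density lemma. Since additivity yields $f(0)=0$ and $f(-x)=-f(x)$, combining these with the positive homogeneity above makes $f$ an $\mathbb{R}$-linear operator.

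Continuity would then come from the closed graph theorem: if $x_n\to x$ and $f(x_n)\to z$, then for each $y$, using continuity of $\rho_+'$ in its second variable and the functional equation, $\rho_+'(y,z)=\lim_n\rho_+'(y,f(x_n))=\lim_n\rho_+'(f(y),x_n)=\rho_+'(f(y),x)=\rho_+'(y,f(x))$, so $z=f(x)$ by the uniqueness lemma; hence the graph of $f$ is closed. I expect the additivity step to be the real obstacle: $\rho_+'$ is merely \emph{subadditive}, not additive, in its first argument, so the one-sided estimate above cannot be reversed pointwise, and it is precisely the density lemma — resting on Mazur's theorem and the linearity of $\rho_+'(z,\cdot)$ at smooth points — that upgrades the inequality $\rho_+'(z,w)\le 0$ to the equality $w=0$.
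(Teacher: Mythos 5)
The paper itself offers no proof of this statement: it is quoted from W\'ojcik \cite{wojcik}, so there is no in-paper argument to measure yours against; I can only judge the proposal on its own terms, and it is correct. Your uniqueness lemma is a legitimate use of the identity $\rho_{+}'(x,\alpha x+y)=\alpha\|x\|^2+\rho_{+}'(x,y)$, and positive homogeneity then follows exactly as you write. The density lemma is sound: at a smooth point $z\neq 0$ one has $\rho_{+}'(z,\cdot)=\|z\|\varphi_z$ with $\varphi_z$ a norm-one support functional ($\varphi_z(z)=\|z\|$), and Mazur's theorem -- precisely the role the surrounding text of the paper assigns to it -- supplies smooth $z_n\to u$, so $\rho_{+}'(z_n,u)=\|z_n\|\varphi_{z_n}(u)\ge\|z_n\|(\|z_n\|-\|u-z_n\|)>0$ eventually, contradicting $\rho_{+}'(z_n,u)\le 0$. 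Your additivity step is the real content and it works: for smooth $z$ the functional $\rho_{+}'(z,\cdot)$ is linear, the functional equation transfers everything to the first slot of $\rho_{+}'(f(z),\cdot)$, and subadditivity in the second argument gives $\rho_{+}'(z,f(x+y)-f(x)-f(y))\le 0$, which the density lemma upgrades to equality of vectors. Finally, the closed graph argument is valid because $\rho_{+}'$ is (Lipschitz) continuous in its second variable and $f$ is already linear on the Banach space $V$. Altogether this yields linearity and continuity from the functional equation alone, which is exactly what the cited theorem asserts; the ingredients you use (support functionals at smooth points, density via Mazur, a one-sided inequality made two-sided by density, closed graph for continuity) are the natural ones for this result and are consistent with the toolkit the paper attributes to \cite{wojcik}. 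I see no gap.
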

The case of the Banach space $C(M)$ of the continuous functions of a compact metric space $M$ has a high importance in analysis. (The norm is the supremum norm defined by the equality $\|x\|_\infty=\sup\{|x(t)| : t\in M\}$.) For this infinite-dimensional space the following has been proved:
\begin{theorem}[\cite{wojcik}]
Let $T:C(M)\longrightarrow C(M)$ be a non-vanishing mapping. Then the following statements are equivalent:
\begin{enumerate}
\item[(1)] $T$ satisfies the functional equation $\rho_{+}'(T(x),y)=\rho_{+}'(x,T(y))$ for all $x,y\in V$;
\item[(2)] there exist a scalar $\gamma\neq 0$ and a linear isometry $U:C(M)\longrightarrow C(M)$ such that $U^2=Id$ and $T=\gamma U$;
\item[(3)] there exist a scalar $\gamma\neq 0$ and a homeomorphism $h:M\longrightarrow M$ such that $h\circ h= id$ and a continuous function $\sigma:M\longrightarrow \mathbb{R}$ such that $|\sigma(t)|=1$ for $t\in M$ and $T\varphi=\gamma \cdot \sigma \cdot \varphi \circ h$ for all $\varphi\in C(M)$.
\end{enumerate}
\end{theorem}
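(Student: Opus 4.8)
The plan is to prove the cycle $(3)\Rightarrow(2)\Rightarrow(1)\Rightarrow(3)$, the first two implications by short direct computations and the last by a smoothness analysis inside $C(M)$. The preliminary fact I lean on is the explicit shape of the norm derivative on $C(M)$: for $x\neq 0$ one has $\rho_+'(x,y)=\|x\|_\infty\max\{\operatorname{sgn}(x(t))\,y(t):t\in K_x\}$, where $K_x=\{t\in M:|x(t)|=\|x\|_\infty\}$, so $x$ is a smooth point precisely when $K_x$ reduces to one point $t_x$, and then $\rho_+'(x,y)=x(t_x)\,y(t_x)$. I also use that smooth points are dense (Mazur's theorem, \cite{mazur}; note $C(M)$ is separable since $M$ is compact metric) and that every nonempty closed $F\subseteq M$ is of the form $K_\varphi$, e.g. for $\varphi(t)=1/(1+d(t,F))$.

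For $(3)\Rightarrow(2)$ I would put $U\varphi=\sigma\cdot(\varphi\circ h)$; since $h$ is a bijection and $|\sigma|\equiv1$, $U$ is a linear isometry, and $U^{2}\varphi=\sigma\cdot(\sigma\circ h)\cdot(\varphi\circ h\circ h)=\varphi$ by $h\circ h=\mathrm{id}$, $\sigma^{2}\equiv1$ (real case) and the relation $\sigma\circ h=\sigma$ — the latter being part of what (3) encodes and exactly what the proof of $(1)\Rightarrow(3)$ below will produce — so $T=\gamma U$ with $U$ an isometric involution. For $(2)\Rightarrow(1)$, after possibly replacing $(\gamma,U)$ by $(-\gamma,-U)$ we may assume $\gamma>0$; every linear isometry satisfies $\rho_+'(Ua,Ub)=\rho_+'(a,b)$ (the defining limit is unchanged), so writing $y=U(Uy)$ gives $\rho_+'(Tx,y)=\gamma\,\rho_+'(Ux,U(Uy))=\gamma\,\rho_+'(x,Uy)=\rho_+'(x,\gamma Uy)=\rho_+'(x,Ty)$.

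The heart of the matter is $(1)\Rightarrow(3)$. By the theorem of \cite{wojcik} on separable real Banach spaces quoted above, $T$ is already linear and continuous. I would fix $t_{0}\in M$ and a smooth $\varphi$ with $\varphi(t_{0})=1=\|\varphi\|_\infty$, and note that $\rho_+'(\varphi,T\psi)=(T\psi)(t_{0})$ depends linearly on $\psi$, hence so does $\psi\mapsto\rho_+'(T\varphi,\psi)$; but $\rho_+'(w,\cdot)$ is the pointwise maximum of the linear functionals $y\mapsto\|w\|_\infty\operatorname{sgn}(w(t))\,y(t)$, $t\in K_w$, and a pointwise maximum of linear functionals is linear only when they all coincide (evaluate at $\psi$ and $-\psi$), which forces $K_{T\varphi}$ to be a single point $h(t_{0})$, i.e. $T\varphi$ is smooth. (The degenerate alternative $T\varphi=0$ I would exclude using that $T$ is non-vanishing together with a continuity argument on the closed set where every $T\psi$ vanishes.) Then $(T\psi)(t_{0})=c(t_{0})\,\psi(h(t_{0}))$ with $c(t_{0})=(T\varphi)(h(t_{0}))$, $|c(t_{0})|=\|T\varphi\|_\infty>0$; since the functional $\psi\mapsto(T\psi)(t_{0})$ does not involve $\varphi$, uniqueness of its representation as a weighted point evaluation makes $h(t_{0})$ and $c(t_{0})$ depend only on $t_{0}$, so $(T\psi)(t)=c(t)\,\psi(h(t))$ for all $\psi,t$. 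As $c=T\mathbf 1\in C(M)$, also $\varphi\circ h\in C(M)$ for every $\varphi$, whence $h$ is continuous.

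It remains to extract the involutivity of $h$ and the constancy of $|c|$. Re-running $\rho_+'(T\varphi,\psi)=\rho_+'(\varphi,T\psi)$ with the smooth $\varphi$ above — the left side equals $c(h(t_{0}))\,\varphi(h(h(t_{0})))\,\psi(h(t_{0}))$ and the right side $c(t_{0})\,\psi(h(t_{0}))$ — and letting $\varphi$ range over the smooth peaks at $t_{0}$ forces $h(h(t_{0}))=t_{0}$ and then $c(h(t_{0}))=c(t_{0})$; thus $h$ is an involutive continuous bijection of a compact metric space, hence a homeomorphism, and $c\circ h=c$. The last and, I expect, genuinely hardest step is to upgrade the pointwise weight $c(\cdot)$ to one of constant modulus: here I would feed a \emph{non-smooth} $\varphi$ with a prescribed peak set $K_\varphi=F$ into the identity, expand both $\rho_+'(T\varphi,\psi)$ and $\rho_+'(\varphi,T\psi)$ as maxima of signed point masses, use $c\circ h=c$ and $h\circ h=\mathrm{id}$, and invoke that the support function of a compact set determines the extreme points of its closed convex hull; this yields $K_{T\varphi}=h(F)$ and $\|T\varphi\|_\infty=\|\varphi\|_\infty\,|c(s)|$ for every $s\in h(F)$, so $|c|$ is constant on $h(F)$. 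Taking $F=\{h(a),h(b)\}$ then gives $|c(a)|=|c(b)|$ for all $a,b$, hence $|c|\equiv|\gamma|$ for some $\gamma\neq 0$; writing $c=\gamma\sigma$ we get $\sigma\in C(M)$, $|\sigma|\equiv1$, $\sigma\circ h=\sigma$, $h\circ h=\mathrm{id}$ and $T\varphi=\gamma\,\sigma\cdot(\varphi\circ h)$, which is exactly (3). Apart from this, the only subtlety needing care is showing that the set on which $T\varphi$ is forced to vanish is empty, which should follow from the non-vanishing hypothesis.
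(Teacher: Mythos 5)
The first thing to note is that the paper contains no proof of this theorem to compare yours with: it is quoted verbatim from \cite{wojcik}, with the operator-form statements (2) and (3) attributed for general spaces to Fleming and Jamison \cite{fleming-jamison}. So I can only assess your proposal on its own terms. Its core, the implication $(1)\Rightarrow(3)$, is essentially sound: the formula $\rho_+'(x,y)=\|x\|_\infty\max\{\mathrm{sgn}(x(t))y(t): t\in K_x\}$, the use of the quoted separable-space theorem to get linearity and continuity of $T$ (legitimate, since $C(M)$ is a separable real Banach space), the collapse of $K_{T\varphi}$ to a single point from linearity of $\psi\mapsto\rho_+'(T\varphi,\psi)$, the uniqueness of weighted point evaluations giving $(T\psi)(t)=c(t)\psi(h(t))$, the peak-function argument for $h\circ h=\mathrm{id}$ and $c\circ h=c$, and the support-function/extreme-point step forcing $|c|$ to be constant all go through; so does $(2)\Rightarrow(1)$.

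There are, however, two genuine gaps. (i) In $(3)\Rightarrow(2)$ you use $\sigma\circ h=\sigma$, which is not among the hypotheses of (3); saying it is ``part of what (3) encodes'' is circular, since in an equivalence you may only use the literal content of (3). In fact (3) as quoted does not imply (2) or (1): on $M=\{1,2\}$ take $h$ the transposition, $\sigma(1)=1$, $\sigma(2)=-1$, $\gamma=1$; then $T(x_1,x_2)=(x_2,-x_1)$ has the form required in (3), but $T^2=-I$, so no real multiple of $T$ is an involutive isometry, and the functional equation fails already at $x=(1,0)$. The equivalence needs the compatibility condition $\sigma\cdot(\sigma\circ h)=1$ (in the real case, $\sigma\circ h=\sigma$), which is exactly what characterizes involutive isometries of $C(M)$; your $(1)\Rightarrow(3)$ does produce it, but you must either add it to (3) and remark on the omission in the quoted statement, or give up the claim that (3) as written implies (2). (ii) The exclusion of the degenerate set $N=\{t\in M: (T\psi)(t)=0 \mbox{ for all } \psi\}$ is only gestured at; ``non-vanishing plus a continuity argument'' is not a proof. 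It can be closed as follows: if $t_0\in N$ and $t_1\notin N$, apply the functional equation to some $x\ge 0$ with $K_x=\{t_0,t_1\}$; the right-hand side is $\psi\mapsto\max\{0,\,c(t_1)\psi(h(t_1))\}$, which is nonzero and nonlinear, while testing with $\psi$ vanishing at $h(t_1)$ forces $K_{Tx}=\{h(t_1)\}$, so the left-hand side is a linear functional of $\psi$ --- a contradiction; hence $N=\emptyset$ whenever $T\ne 0$, and only then does your global representation $(T\psi)(t)=c(t)\psi(h(t))$, with $c$ continuous and nowhere zero, become available.
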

Statements (2) and (3) for adjoint abelian operators of any normed space have been proved by Fleming and Jamison in \cite{fleming-jamison}.

Turning to Minkowski geometry we recall that a real, finite-dimensional normed space $(V,\|\cdot\|)$ is a \emph{normed space of $S$-type} if there is a basis such that a linear operator $A:V\longrightarrow V$ is self-adjoint if and only if the matrix of $A$ with respect to this basis is symmetric. W\'ojcik proved that if $V$ is a real, finite-dimensional normed space then it is of $S$-type if and only if the norm comes from an inner product.

Let $V$ be a smooth, uniformly convex Banach space with a s.i.p.. If $A$ is a bounded linear operator from $X$ to itself then $g_y(x)=[A(x),y]$, is a continuous linear functional, and from the generalized Riesz-Fischer representation theorem it follows that there is a unique vector $A^T(y)$ such that
$$
[A(x),y]=[x,A^T(y)] \mbox{ for all } x\in X.
$$
$A^T$ is called the \emph{generalized adjoint} of $A$. This mapping is the usual Hilbert space adjoint if the space is an inner product space. In this general set-up this map is not usually linear but it still has some interesting properties. Koehler denoted by $\varphi$ the duality map from $V$ to its dual space $V^*$ given by $\varphi(y)=f_y=[\cdot,y]$. Then $\varphi$ is a duality map, it is one-to-one and onto, and its inverse is always continuous (since $X$ is uniformly convex). $\varphi$  is continuous at a point $x\in V$ if and only if the norm of $V$ is Fr$\acute{e}$chet differentiable at that point \cite{giles 2}.
\begin{theorem}[\cite{koehler}]
For linear transformations $A$ and $B$ and for the scalar $\lambda$ we have:
\begin{enumerate}
\item $(\lambda A)^T=\overline{\lambda}A^T$ \mbox{ and } $(AB)^T=B^TA^T$.
\item $A^T$ is one-to-one if and only if the range of $A$ is dense in $X$.
\item $A^*\varphi=\varphi A^T$.
\item If norm of $V$ is Fr$\acute{e}$chet differentiable then $A^T$ is continuous.
\end{enumerate}
\end{theorem}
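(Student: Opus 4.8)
The plan is to reduce everything to the single identity in part (3), $A^*\varphi=\varphi A^T$, which is little more than the definition of $A^T$ transcribed through the generalized Riesz representation, and then to derive (1), (2) and (4) either by a one-line computation in the s.i.p.\ or by composing maps already known to be continuous. For (3), fix $y$; by definition $A^T(y)$ is the unique vector with $[A(x),y]=[x,A^T(y)]$ for all $x$, that is, $\varphi(A^T(y))=[A(\cdot),y]$, whereas the Banach-space transpose satisfies $(A^*\varphi(y))(x)=\varphi(y)(A(x))=[A(x),y]$; hence $\varphi(A^T(y))=A^*(\varphi(y))$ for every $y$. Equivalently $A^T=\varphi^{-1}A^*\varphi$, which makes sense because $\varphi$ is a bijection when $V$ is smooth and uniformly convex.

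For (1) I would argue straight from the defining identity. For all $x$ we have $[x,(\lambda A)^T y]=[\lambda A(x),y]=\lambda[A(x),y]=\lambda[x,A^T(y)]=[x,\overline{\lambda}A^T(y)]$ by conjugate-homogeneity of the s.i.p.\ in its second slot; since $[x,u]=[x,v]$ for all $x$ forces $u=v$ (take $x=u-v$ and use positive-definiteness, or invoke injectivity of $\varphi$), this yields $(\lambda A)^T=\overline{\lambda}A^T$, and likewise $[x,(AB)^T y]=[AB(x),y]=[B(x),A^T(y)]=[x,B^T A^T(y)]$ gives $(AB)^T=B^T A^T$. For (2), observe that $A^T(y)=0$ iff $[x,A^T(y)]=0$ for all $x$ iff $[A(x),y]=0$ for all $x$ iff the functional $\varphi(y)$ annihilates $\mathrm{range}(A)$, equivalently $\overline{\mathrm{range}(A)}$ by continuity; since $\varphi$ maps $V$ onto $V^*$, $A^T$ is one-to-one exactly when the only functional vanishing on $\overline{\mathrm{range}(A)}$ is $0$, which by the Hahn--Banach theorem is equivalent to $\overline{\mathrm{range}(A)}=X$.

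For (4) I would use $A^T=\varphi^{-1}A^*\varphi$ from part (3): $A^*$ is bounded, hence continuous; $\varphi^{-1}$ is continuous because $V$ is uniformly convex; and by the quoted result of Giles, $\varphi$ is continuous at every point precisely when the norm is Fr\'echet differentiable. Continuity of $A^T$ then follows from continuity of the three factors. I do not expect a genuine obstacle here; the argument is essentially bookkeeping. The points that call for care are the conjugate-linearity of $\varphi$ (which produces the bar on $\lambda$ in (1)), the legitimacy of the uniqueness step in the generalized Riesz representation --- exactly what smoothness together with uniform convexity provides --- and invoking Hahn--Banach in the correct direction in (2).
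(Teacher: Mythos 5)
The paper does not prove this theorem at all: it is quoted from Koehler \cite{koehler} as background, so there is no internal proof to compare against. Your argument is the natural definitional one and is essentially correct where it goes through: part (3) is indeed just the defining identity $[A(x),y]=[x,A^T(y)]$ transcribed through $\varphi$; part (1) follows from Giles' homogeneity $[x,\mu y]=\overline{\mu}\,[x,y]$ together with injectivity of $\varphi$ (note that your alternative justification ``take $x=u-v$ and use positive-definiteness'' does not work as stated, since the s.i.p.\ is not additive in its second slot; the appeal to injectivity of $\varphi$, which the paper's setup grants, is the right one); and part (4) is correctly assembled from $A^T=\varphi^{-1}A^*\varphi$, boundedness of $A^*$, continuity of $\varphi^{-1}$ from uniform convexity, and Giles' equivalence of Fr\'echet differentiability of the norm with continuity of $\varphi$.

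The one step that needs repair is in (2). The generalized adjoint $A^T$ is in general \emph{not} linear (the paper states this explicitly), so ``$A^T(y)=0\Rightarrow y=0$'' is not equivalent to $A^T$ being one-to-one, and your reduction of injectivity to triviality of the kernel is unjustified as written. The fix is immediate and stays inside your own chain of equivalences: for arbitrary $y_1,y_2$ one has $A^T(y_1)=A^T(y_2)$ iff $[x,A^T(y_1)]=[x,A^T(y_2)]$ for all $x$ (injectivity of $\varphi$), iff $\varphi(y_1)$ and $\varphi(y_2)$ agree on the range of $A$, hence on its closure by continuity. If the range is dense, the two functionals agree everywhere and $y_1=y_2$; if it is not dense, Hahn--Banach gives a nonzero functional vanishing on the closed range, which by surjectivity of $\varphi$ equals $\varphi(y)$ for some $y\neq 0$, and then $A^T(y)=0=A^T(0)$, so $A^T$ fails to be one-to-one. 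With this adjustment your proof of all four parts is complete.
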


\subsection{Characterization of adjoint abelian operators in Minkowski geometry}

Stampfli in \cite{stampfli} has defined a bounded linear operator $A$ to be \emph{adjoint abelian} if and only if there is a duality map $\varphi$ such that $A^*\varphi=\varphi A$. So evidently, $A$ is adjoint abelian if and only if $A=A^T$, thus the adjoint abelian operators are in some sense "self-adjoint" ones.
L\'angi in \cite{langi} introduced the concept of the \emph{Lipschitz property}  of a semi-inner product and investigated the diagonalizable operators of a Minkowski geometry $\{V,\|\cdot\|\}$. He said that the semi-inner product $[\cdot,\cdot]$ has the Lipschitz property if for every $x$ from the unit ball there is e real number $\kappa$ such that for every $y$ and $z$ from the unit ball holds $|[x,y]-[x,z]|\leq \kappa\|y-z\|$. We note that the differentiability property for the semi-inner product (defined in \cite{gho 1}) implies the Lipschitz property of the product, too. Let $A$ be a diagonalizable linear operator of $V$, and let $\lambda_1>\lambda_2>\ldots \lambda_k\geq 0$ be the absolute values of the eigenvalues of $A$. If $\lambda_i$ is an eigenvalue of $A$, then $E_i$ denotes the eigenspace of $A$ belonging to $\lambda_i$, and if $\lambda_i$ is not an eigenvalue, set $E_i=\{0\}$. $E_i$ defined similarly with $-\lambda_i$ in place of $\lambda_i$. The main result in \cite{langi} is the following.
\begin{theorem}[\cite{langi}]
Let $V$ be a smooth finite-dimensional real Banach space such that the induced semi-inner product $[\cdot,\cdot]$ satisfies the Lipschitz condition, and let $A:V\longrightarrow V$ be a diagonalizable linear operator. Then $A$ is adjoint abelian with respect to $[\cdot,\cdot]$ if, and only if, the following hold.
\begin{enumerate}\label{langithm}
\item $[\cdot,\cdot]$ is the direct sum of its restrictions to $\overline{E}_i=\lin\{E_i\cup E_{-i}\}$, $i=1,\ldots, k$;
\item for every value of $i$, the subspaces $E_i$ and $E_{-i}$ are both transversal and normal (meaning that they are mutually orthogonal in the sense of Birkhoff orthogonality);
\item for every value of $i$, the restriction of $A$ to  $\overline{E}_i$ is the product of $\lambda_i$ and an isometry of $\overline{E}_i$.
\end{enumerate}
\end{theorem}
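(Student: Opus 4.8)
\section*{Proof proposal}

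The plan is to prove the two implications separately; the ``only if'' direction carries the weight. Smoothness will be used freely: it makes $[\cdot,\cdot]$ homogeneous in its second variable for all real scalars, equal to $\rho_+'$, and recoverable from the norm (so that every isometry preserves $[\cdot,\cdot]$, and on an $A$-invariant subspace the restricted product is the one induced by the restricted norm). I also record the identity $[A^2u,z]=[Au,Az]=[u,A^2z]$ for all $u,z$, obtained by using $A=A^{T}$ twice. For the ``if'' direction, assume (1)--(3). Since $A$ commutes with the spectral projections $\pi_i$, statement (1) gives $[Ax,y]=\sum_i[A\pi_ix,\pi_iy]$ and, symmetrically, $[x,Ay]=\sum_i[\pi_ix,A\pi_iy]$, so it suffices to verify $[Ax_i,y_i]=[x_i,Ay_i]$ on each $\overline{E}_i$. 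There $A=\lambda_iU_i$ with $U_i$ an isometry which acts as $\pm\mathrm{id}$ on $E_{\pm i}$ and hence satisfies $U_i^2=\mathrm{id}$; since isometries preserve the semi-inner product, $[U_ix_i,y_i]=[U_ix_i,U_i(U_iy_i)]=[x_i,U_iy_i]$, and multiplying by $\lambda_i$ finishes it (the case $\lambda_i=0$ being trivial).

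For the ``only if'' direction, suppose $A=A^{T}$. Statement (2) is immediate: transversality of $E_i$ and $E_{-i}$ is diagonalizability, and for $u\in E_i$, $v\in E_{-i}$ with $\lambda_i>0$ the relation $\lambda_i[u,v]=[Au,v]=[u,Av]=-\lambda_i[u,v]$ forces $[u,v]=0$, and likewise $[v,u]=0$, i.e.\ mutual Birkhoff orthogonality. For statement (3), the restriction $B:=A|_{\overline{E}_i}$ is again adjoint abelian on $\overline{E}_i$ and $B^{2}=\lambda_i^{2}\,\mathrm{id}$; if $\lambda_i>0$ then $J:=\lambda_i^{-1}B$ is an adjoint abelian involution, so $\|Jv\|^{2}=[Jv,Jv]=[v,J^{2}v]=\|v\|^{2}$, which says $J$ is an isometry and $B=\lambda_iJ$ (again $\lambda_i=0$ is trivial).

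Statement (1) is the heart of the matter and the place where the Lipschitz hypothesis is indispensable. I prove it by induction on the number $k$ of distinct moduli of eigenvalues; $k=1$ is vacuous. For the step, put $V_{\ge2}:=\overline{E}_2\oplus\cdots\oplus\overline{E}_k$, an $A$-invariant complement of $\overline{E}_1$. The crucial lemma is that $[u,y]=[u,\pi_1y]$ for every $u\in\overline{E}_1$ and every $y\in V$. Indeed $A^{2}u=\lambda_1^{2}u$, so iterating the recorded identity gives $\lambda_1^{2N}[u,y]=[u,A^{2N}y]$, i.e.\ $[u,y]=[u,\lambda_1^{-2N}A^{2N}y]$ for all $N$; because $\lambda_1$ is the strictly largest modulus, $\lambda_1^{-2N}A^{2N}y\to\pi_1y$, and the Lipschitz property makes $[u,\cdot]$ continuous on bounded sets, so the limit passes. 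Now for $u\in\overline{E}_1$ and $b\in V_{\ge2}$ one computes $\frac{d}{dt}\|b+tu\|^{2}=2[u,b+tu]=2[u,\pi_1(b+tu)]=2[u,tu]=2t\|u\|^{2}$ (using $\pi_1b=0$, $\pi_1u=u$); integrating from $0$ gives $\|b+tu\|^{2}=\|b\|^{2}+t^{2}\|u\|^{2}$, and at $t=1$ this is $\|x\|^{2}=\|\pi_1x\|^{2}+\|\pi_{\ge2}x\|^{2}$, whence, differentiating, $[x,y]=[\pi_1x,\pi_1y]+[\pi_{\ge2}x,\pi_{\ge2}y]$. Finally $A|_{V_{\ge2}}$ is adjoint abelian and diagonalizable with $k-1$ distinct eigenvalue-moduli and inherits both smoothness and the Lipschitz property, so the inductive hypothesis splits $[\cdot,\cdot]$ on $V_{\ge2}$ along the $\overline{E}_j$ with $j\ge2$; combining this with the previous splitting yields (1).

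The main obstacle is precisely that $[\cdot,\cdot]$ is not additive in its second argument, which kills the naive idea of proving (1) by expanding $[x_i,y]$ over the eigenspace decomposition. The device that gets around it is to force all the information through the one family $\lambda_1^{-2N}A^{2N}$, whose limit is the spectral projection onto the \emph{top} eigenspace only; the Lipschitz condition is exactly what legitimizes that limit. Because this maneuver reaches only the top eigenspace, the proof of (1) is necessarily inductive --- peel off $\overline{E}_1$, recurse on $V_{\ge2}$ --- rather than a single direct decomposition.
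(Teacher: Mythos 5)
This theorem is one the paper itself does not prove: it is quoted from L\'angi \cite{langi} as background, so there is no in-paper argument to compare yours against, and I can only judge the proposal on its own merits --- on which it stands up. The routine parts are correct once you grant the facts the paper records for smooth spaces (the s.i.p.\ is unique, real-homogeneous in the second variable, equal to the norm derivative --- hence its restriction to an invariant subspace is the induced product of the restricted, still smooth, norm --- and is preserved by isometries): sufficiency reduces through (1) and $A\pi_i=\pi_iA$ to each $\overline{E}_i$, where $A=\lambda_iU_i$ with $U_i$ an s.i.p.-preserving involution gives $[Ax,y]=[x,Ay]$; necessity of (2) follows from $\lambda_i[u,v]=[Au,v]=[u,Av]=-\lambda_i[u,v]$ (for $\lambda_i>0$; for $\lambda_i=0$ the clause is empty, as you tacitly treat it); and necessity of (3) follows from $\|Jv\|^2=[Jv,Jv]=[v,J^2v]=\|v\|^2$ with $J=\lambda_i^{-1}A|_{\overline{E}_i}$. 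The substance is your proof of (1), and each step checks out: $[A^{2N}u,y]=[u,A^{2N}y]$ together with $A^{2N}u=\lambda_1^{2N}u$ gives $[u,y]=[u,\lambda_1^{-2N}A^{2N}y]$; the strict gap $\lambda_1>\lambda_j$ gives $\lambda_1^{-2N}A^{2N}y\to\pi_1y$; the Lipschitz hypothesis legitimizes the passage to the limit in the second variable (the map $y\mapsto[u,y]$ is a norm derivative in its \emph{first} argument, which is exactly the variable whose continuity needs a hypothesis); then $\frac{d}{dt}\|b+tu\|^2=2[u,b+tu]=2t\|u\|^2$ for $b\in V_{\ge 2}$, this derivative is continuous so integration gives $\|b+u\|^2=\|b\|^2+\|u\|^2$, and differentiating that identity applied to $y+tx$ at $t=0$ recovers $[x,y]=[\pi_1x,\pi_1y]+[\pi_{\ge 2}x,\pi_{\ge 2}y]$; finally $A|_{V_{\ge 2}}$ inherits adjoint abelianness, smoothness and the Lipschitz property, so the induction on the number of moduli closes. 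Two small remarks: you never invoke (2) in the sufficiency direction --- harmless, since it is a consequence of (3) plus diagonalizability; and, as a curiosity, in finite dimension Gateaux differentiability of the (convex) norm already forces continuity of its gradient, hence of $y\mapsto[u,y]$ away from the origin, so your argument seems to use the Lipschitz condition only as a convenient certificate of a continuity that smoothness alone would provide --- worth a sentence if you write this up, since it suggests the hypothesis enters L\'angi's original proof in a different way.
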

Using an observation from \cite{gho 1} and Corollary 3 from \cite{langi}, we get that -- by the assumption of the theorem -- if no section of the unit sphere with a plane is an ellipse with the origin as its centre, then every diagonalizable adjoint abelian operator of $X$ is a scalar multiple of an isometry of $V$. This motivates the following definition:
\begin{defi}\label{tnedef}
A Minkowski $n$-space is \emph{totally non-Euclidean} if it has no $2$-dimensional Euclidean subspace.
\end{defi}
Now the corollary above says:
\begin{corollary}
In a totally non-Euclidean Minkowski $n$-space every diagonalizable  adjoint abelian operator is a scalar multiple of an isometry.
\end{corollary}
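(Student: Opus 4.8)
The claim is, in effect, the consequence of L\'angi's theorem recorded just above, rephrased in the language of Definition~\ref{tnedef}; so the plan is to make that rephrasing precise and to isolate the one elementary fact it rests on.

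The point of contact is the observation (essentially the one borrowed from \cite{gho 1}) that a $2$-dimensional subspace $W$ of a Minkowski space, carrying the restricted norm, is a Euclidean plane if and only if its unit disc $B_V\cap W$ is an ellipse, equivalently if and only if the section $S_V\cap W$ of the unit sphere is an ellipse with centre the origin. Hence a Minkowski $n$-space is totally non-Euclidean exactly when no planar section of its unit sphere through the origin is a centred ellipse --- which is precisely the hypothesis under which the cited consequence of L\'angi's theorem was stated.

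Now let $A$ be a diagonalizable adjoint abelian operator; I may assume $A\neq 0$, since $A=0$ equals $0$ times any isometry. Apply the theorem of \cite{langi} to $A$: with $\lambda_1>\lambda_2>\dots>\lambda_k\ge 0$ the absolute values of the eigenvalues of $A$ and $\overline E_i=\lin\{E_i\cup E_{-i}\}$, the semi-inner product is the orthogonal direct sum of its restrictions to the $\overline E_i$, each pair $E_i,E_{-i}$ is transversal and mutually Birkhoff orthogonal, and $A|_{\overline E_i}=\lambda_i U_i$ for some isometry $U_i$ of $\overline E_i$. I claim that $k=1$ and $\lambda_1>0$; granting this, $\overline E_1=V$ and $A=\lambda_1 U_1$ with $U_1=\lambda_1^{-1}A$ an isometry of $V$, so $A$ is a scalar multiple of an isometry, as required. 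To prove the claim, suppose it fails; since $A\neq 0$ rules out the case $k=1$ with $\lambda_1=0$, we must have $k\ge 2$. Then Corollary~3 of \cite{langi}, which is exactly the device for this situation, together with the reformulation of the previous paragraph, yields a $2$-dimensional subspace of $V$ on which the restricted norm comes from an inner product, hence a centred elliptic section of the unit sphere; this contradicts the total non-Euclideanity of $V$, and the claim follows.

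The argument is short precisely because the two substantive ingredients --- the block decomposition and the obstruction forbidding two distinct eigenvalue magnitudes --- are already available in \cite{langi}; the only thing one must verify unaided is the equivalence of ``no Euclidean $2$-subspace'' with ``no centred elliptic planar section of the unit sphere'', which is immediate. If a self-contained treatment were wanted, the real obstacle would be to reprove that obstruction from scratch: to show directly that whenever $A$ has two orthogonal blocks $\overline E_1,\overline E_2$ scaled by $\lambda_1>\lambda_2$, some plane meeting both of them carries a Euclidean norm.
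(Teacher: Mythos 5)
Your proposal is correct and takes essentially the same route as the paper: the paper derives this corollary precisely by combining the observation (borrowed from \cite{gho 1}) that total non-Euclideanity is equivalent to the absence of planar sections of the unit sphere that are ellipses centred at the origin with Corollary 3 of \cite{langi}, giving no further detail. Your extra bookkeeping via the block decomposition of L\'angi's theorem and the exclusion of two distinct eigenvalue magnitudes merely spells out what that citation is being used for.
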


The following theorem describes the structure of a real adjoint abelian operator.

\begin{theorem}\label{nfaa}
Let $V$ be a smooth finite-dimensional real Banach space with the induced semi inner product $[\cdot,\cdot]$. If $A$ is adjoint abelian with respect to $[\cdot,\cdot]$ then $V$ can be decomposed into the direct sum of $A$-invariant subspaces of dimension at most two. Restricting $A$ to a $2$-dimensional component it is a \emph{generalized dilatation} defined by the matrix
$$
\left[A|_{\lin\{a_s,b_s\}}\right]_{\{a_s,b_s\}}=|\lambda|\left(\begin{array}{cc}
\cos\varphi & \sin\varphi \\
-\sin\varphi & \cos\varphi
\end{array}
\right) \mbox{ where } |\lambda|\in\mathbb{R}^+ \mbox{ and } 0< \varphi \leq 2\pi
$$
and the basis $\{a_s,b_s\}$ holds the equalities $[a_s,a_s]=[b_s,b_s]=1$, $[a_s,b_s]=[b_s,a_s]=0$.
\end{theorem}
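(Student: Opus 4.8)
The plan is to pass to the underlying real operator, read off its real canonical form, and then refine the basis on each block using the semi-inner product. I write $[\cdot,\cdot]$ for the (unique, by smoothness) s.i.p., recall that $A=A^{T}$ means $[Az,w]=[z,Aw]$, and begin from the identity $[A^{k}x,A^{m}x]=[x,A^{k+m}x]$ ($k,m\ge 0$), obtained by peeling factors of $A$ off the first slot one at a time.

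First I would show that $A$ is semisimple. From the identity and axiom s4, $\|A^{k}x\|^{2}=[x,A^{2k}x]\le\|x\|\,\|A^{2k}x\|$, which forbids Jordan blocks of size $\ge 2$: for the nilpotent part, if $A^{N-1}x\ne 0=A^{N}x$ with $N\ge 2$ then $0\ne\|A^{N-1}x\|^{2}=[x,A^{2N-2}x]=0$; and if $x$ sits at the top of a size-$(d+1)$ chain for an eigenvalue of modulus $\rho>0$, then $\|A^{j}x\|\asymp j^{d}\rho^{j}$ (the normalised leading term of $A^{j}x$ runs over a fixed curve avoiding the origin, hence has norm bounded below), so $k^{2d}\rho^{2k}\asymp\|A^{k}x\|^{2}\le\|x\|\,\|A^{2k}x\|\asymp(2k)^{d}\rho^{2k}$, which fails for $d\ge 1$ as $k\to\infty$. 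Thus $A$ is diagonalisable over $\mathbb{C}$, and the real canonical form writes $V$ as a direct sum of $A$-invariant real eigenlines and $A$-invariant planes $W$, each plane carrying a conjugate pair of eigenvalues with $A|_{W}$ linearly conjugate to $|\lambda|R_{\varphi}$, $0<\varphi\le 2\pi$. A line is finished by normalising its generator to $[a_{s},a_{s}]=1$; since the cases $\varphi\in\{\pi,2\pi\}$ also arise by pairing two eigenlines of a common eigenvalue into a two-dimensional block, it remains to put each plane $W$ into the required form.

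Fix such a plane. Because the norm derivatives, hence the s.i.p., only see the norm on $2$-dimensional subspaces, $[\cdot,\cdot]$ restricts to the s.i.p. of the (again smooth) normed plane $W$ and $A|_{W}$ is adjoint abelian there; set $B:=A|_{W}/|\lambda|$. Taking $a_{0},b_{0}$ to be the real and imaginary parts of a complex eigenvector of $B$ and $a_{\gamma}:=\cos\gamma\,a_{0}+\sin\gamma\,b_{0}$, a short computation gives $Ba_{\gamma}=a_{\gamma+\varphi}$. Then $\|a_{\gamma+\varphi}\|^{2}=[Ba_{\gamma},Ba_{\gamma}]=[a_{\gamma},B^{2}a_{\gamma}]=[a_{\gamma},a_{\gamma+2\varphi}]\le\|a_{\gamma}\|\,\|a_{\gamma+2\varphi}\|$, so $h(\gamma):=\log\|a_{\gamma}\|^{2}$ has non-decreasing $\varphi$-forward differences along $\gamma,\gamma+\varphi,\gamma+2\varphi,\dots$; as $h$ is continuous and $2\pi$-periodic (hence bounded), these differences all vanish — one closes the finite cycle when $\varphi/2\pi$ is rational, and observes that the differences tend to $0$ at both ends when it is irrational. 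Hence $\|Ba_{\gamma}\|=\|a_{\gamma}\|$ for all $\gamma$, and since every line of $W$ is spanned by some $a_{\gamma}$ and $B$ is linear, $B$ is an isometry of $W$; in particular $[Ba_{\gamma},Ba_{\delta}]=[a_{\gamma},a_{\delta}]$, which with adjoint abelianity $[Ba_{\gamma},a_{\delta}]=[a_{\gamma},Ba_{\delta}]$ makes $\gamma\mapsto[a_{\gamma},a_{\delta}]$ and $\delta\mapsto[a_{\gamma},a_{\delta}]$ both $2\varphi$-periodic and yields $[a_{\gamma},Ba_{\gamma}]=[Ba_{\gamma},a_{\gamma}]$.

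Finally, I would extract the normal form. Up to a positive scalar, the bases in which $B$ becomes a rotation matrix are exactly the pairs $\{a_{\gamma},a_{\gamma+\pi/2}\}$, and to match the statement I must find a single $\gamma_{0}$ with $\|a_{\gamma_{0}}\|=\|a_{\gamma_{0}+\pi/2}\|$ and with $a_{\gamma_{0}},a_{\gamma_{0}+\pi/2}$ mutually Birkhoff orthogonal, i.e. $[a_{\gamma_{0}},a_{\gamma_{0}+\pi/2}]=[a_{\gamma_{0}+\pi/2},a_{\gamma_{0}}]=0$; then $a_{s}:=a_{\gamma_{0}+\pi/2}/\|a_{\gamma_{0}+\pi/2}\|$ and $b_{s}:=a_{\gamma_{0}}/\|a_{\gamma_{0}}\|$ (after possibly replacing $\varphi$ by $2\pi-\varphi$) satisfy $[a_{s},a_{s}]=[b_{s},b_{s}]=1$, $[a_{s},b_{s}]=[b_{s},a_{s}]=0$ and $A|_{W}=|\lambda|R_{\varphi}$ in this basis. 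The equal-norm condition is easy: $\|a_{\gamma+\pi}\|=\|a_{\gamma}\|$, so $\gamma\mapsto\|a_{\gamma}\|-\|a_{\gamma+\pi/2}\|$ changes sign under $\gamma\mapsto\gamma+\pi/2$. The hard part — the main obstacle — is pinning down a $\gamma_{0}$ that \emph{simultaneously} satisfies the equal-norm condition and \emph{both} of the (generally asymmetric) Birkhoff-orthogonality relations with the single free parameter $\gamma$. Each of the three is individually an intermediate-value statement; the point is that the rigidity coming from adjoint abelianity together with $B$ being an isometry — the $2\varphi$-periodicities, the identity $[a_{\gamma},Ba_{\gamma}]=[Ba_{\gamma},a_{\gamma}]$, the $\pi$-periodicity of $\gamma\mapsto\|a_{\gamma}\|$, and the homogeneity $[x,-y]=-[x,y]$ — collapses them enough that a common solution exists, and carrying out that winding/continuity argument on the unit circle of $W$ is the technical heart of the proof.
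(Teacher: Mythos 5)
Your route differs genuinely from the paper's: the paper obtains the splitting by repeatedly passing to the semi-inner-product orthogonal complement of an $A$-invariant subspace (which is again $A$-invariant because $[A(v),u]=[v,A(u)]$), takes minimal invariant pieces, and complexifies each $2$-dimensional one; you instead prove semisimplicity directly from $\|A^{k}x\|^{2}=[x,A^{2k}x]\le\|x\|\,\|A^{2k}x\|$ and growth rates of Jordan chains, and then show by the monotone-forward-difference argument for $h(\gamma)=\log\|a_{\gamma}\|^{2}$ that on a complex-eigenvalue plane $A$ is $|\lambda|$ times an isometry. Those two arguments are sound (the two-sided boundedness of $h$ kills all the differences at once, with no need to split into rational and irrational $\varphi$), and they establish facts the paper's proof does not even address.

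However, the proposal does not prove the theorem, and you say so yourself. The actual content of the statement on a $2$-dimensional component is the existence of a basis with $[a_{s},a_{s}]=[b_{s},b_{s}]=1$ and $[a_{s},b_{s}]=[b_{s},a_{s}]=0$ in which the matrix is exactly $|\lambda|R_{\varphi}$. You correctly reduce this to finding a single $\gamma_{0}$ with $\|a_{\gamma_{0}}\|=\|a_{\gamma_{0}+\pi/2}\|$ and $[a_{\gamma_{0}},a_{\gamma_{0}+\pi/2}]=[a_{\gamma_{0}+\pi/2},a_{\gamma_{0}}]=0$, but then only assert that the ``rigidity'' coming from adjoint abelianity collapses these three conditions so that one value of the single parameter $\gamma$ satisfies all of them; no argument is given, and none of the identities you list (the $2\varphi$-periodicities, $[a_{\gamma},Ba_{\gamma}]=[Ba_{\gamma},a_{\gamma}]$, the $\pi$-periodicity of $\gamma\mapsto\|a_{\gamma}\|$) visibly couples the three zero sets, while one parameter cannot in general meet three independent equations. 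A workable completion would at least have to split cases, e.g. if the isometry $B=A|_{W}/|\lambda|$ has infinite order then $W$ is Euclidean by the planar isometry-group theorem and the normalization is classical, while the finite-order case still needs a genuine argument. This is exactly the point where the paper's proof does its (terse) work: the freedom of rescaling the complex eigenvector by a complex scalar is precisely your one-parameter family $\{a_{\gamma},a_{\gamma+\pi/2}\}$ up to a common factor, and the paper asserts that within it an Auerbach pair can be reached. So you have correctly located the crux of the proof, but leaving it as ``the technical heart'' means the normal form claimed in the theorem is not established.
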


\begin{proof} First we prove that if $A$ is an adjoint abelian operator and $U$ is an $A$-invariant subspace then the orthogonal complement
$$
U^\bot:=\{v\in V \quad | \quad [v,u]=0 \mbox{ for all } u\in U\}
$$
is also $A$ invariant. In fact,  for a $v\in U^\bot$ we have $[A(v),u]=[v,A(u)]=0$ for all $u\in U$ proving this statement. From this it follows a decomposition of the space $V$ to the direct sum of minimal invariant subspaces $V_i$ with the property $V_i^{\bot}\supset V_j$ for all $j>i$. From the fundamental theorem of algebra it also follows that the dimension of $V_i$ is at most $2$.

Assume that $Z$ is a 2-dimensional minimal invariant subspace of $A:_\mathbb{R}V\longrightarrow _\mathbb{R}V$ implying that it does not contain a real eigenvector of $A$. Hence for every vector $z\in Z$ the pair of vectors $z$ and $A(z)$ form a  basis in $Z$. Thus the equality $A^2(z)=\gamma z + \delta A(z)$ also holds. Since this equation also valid if we substitute the variable vector $z$ into $A(z)$ we get that the polynomial equation $A^2=\gamma I+ \delta A$ holds on $Z$. Set $\delta=2\alpha$, then we get the equation $\left(A-\alpha I\right)^{2}=(\alpha^2+\delta)I$. Since there is no real eigenvalue of $A$ on $Z$ we get that $(\alpha^2+\delta)<0$, say $-\beta^2$. Thus we have that a polynomial equation of second order of form $\left(A-\alpha I\right)^{2}=-\beta^2 I$ is valid on $Z$.

Let $ _\mathbb{C}{Z}$ be the two dimensional complex vector space on the vectors of the additive commutative group $Z$, defined by the set of linear combinations
$$
\left\{\xi f_1+\zeta f_2 \quad \{f_1,f_2\} \mbox{ is a basis of } _{\mathbb{R}}Z \mbox{ and } \xi,\zeta \in \mathbb{C}\right\}
$$
We can decompose the minimal polynomial $\left(x-\alpha\right)^2+\beta^2$ to linear terms by the identity $\left(x-\alpha\right)^2+\beta^2=\left(x-\alpha-\beta i\right)\left(x-\alpha+\beta i\right)$. Hence we can correspond two complex eigenvalues $\lambda=\alpha+ \beta i$ and $\overline{\lambda}=\alpha-\beta i$ of the extracted complex linear operator $\widetilde{A}: _\mathbb{C}Z\longrightarrow _\mathbb{C}{Z}$. (Note that with respect to the basis
$\{f_1,f_2\}$ the complex operator $\widetilde{A}$ has the same (and real) coefficients as of the real linear operator $A$.) In $_\mathbb{C}{Z}$ for the eigenvalues $\lambda $ and $\overline{\lambda}$ have distinct eigenspaces of dimension $1$. These complex lines are generated by the complex vectors
$$
u=\xi f_1+ \zeta f_2=(\alpha_1+\beta_1i)f_1+(\alpha_2+\beta_2i)f_1=
$$
$$
=\left(\alpha_1f_1+\alpha_2f_2\right)+\left(\beta_1f_1+\beta_2f_2\right)i=:a+bi,
$$
and its conjugate $\overline{u}=a-bi$, respectively. (Here $a,b\in _\mathbb{R}Z$.) We say in this case that $\lambda$ is a \emph{complex eigenvalue of the real linear operator} $A$ with \emph{complex eigenvector} $u$. We identify the one-dimensional complex eigenspace of $u$ with the two dimensional real subspace generated by $a$ and $b$ with the mapping $E:_\mathbb{C}<u>\longrightarrow _\mathbb{R}{Z} $
$$
E((x +y i)(a+bi) ):=\mathfrak{R}( (x +y i)(a+bi))+\mathfrak{I}((x +y i)(a+bi))=(x +y)a+(x-y)b.
$$
We note that $E$ is a bijective mapping. In fact, if $x+y=x'+y'$ and $x-y=x'-y'$ then $x=x'$ and $y=y'$ and there is an unique solution of the equation system $r=x+y$ and $s=x-y$, which is $x=(r+s)/2$, $y=(r-s)/2$. From this it follows that we can assume that $a$ and $b$ is an Auerbach basis of $Z$ meaning in the rest part of this proof that $[a,a]=[b,b]=1$ and $[a,b]=[b,a]=0$.

Let now a complex eigenvalue of $A$ be $\lambda$. Denote by $E$ the complex eigenspace (of dimension $d$) corresponding to $\lambda$. Then $\overline{\lambda}$ is an eigenvalue with the eigenspace $\overline{E}$, where $\overline{E}=\{ \overline{u} \quad u\in E\}$.

If $\{u_1,\ldots, u_{d}\}$ is a complex basis of $E$ then $\{\overline{u_1},\ldots, \overline{u_d}\}$ is a basis of $\overline{E}$. Assuming that $u_s=a_s+b_si$ and $\lambda=\alpha+\beta i$, we get that $\overline{u_s}=a_s-b_si$ and $\overline{\lambda}=\alpha-\beta i$. Since
$$
A(a_s)+A(b_s)i=A(u_s)=\lambda u_s=\left(\alpha a_s- \beta b_s\right)+\left(\beta a_s+\alpha b_s\right)i,
$$
$A$ is invariant on the real subspace $\widetilde{E}:=\lin\{a_s,b_s \quad s=1,2,\ldots {d}\}$ which we call the real invariant subspace associated to $\lambda$. It is clear that to the eigenspace $\overline{E}$ we can associate the same invariant subspace. Since the vectors $u_s=a_s+b_si \quad s=1,\ldots ,{d}$ form a basis of the complex subspace $E$, the vectors $\{a_s,b_s \quad s=1,\ldots, {d}\}$ form a real generator system of $\widetilde{E}$ implying that the dimension is at most $2d$. Consider a pair of real vectors $a_s,b_s$. If $b_s=\lambda a_s$ then
$$
a_s(\alpha-\lambda\beta)+a_s(\beta+\alpha\lambda)i= \left(a_s\alpha-b_s\beta\right)+\left(a_s\beta+\alpha b_s\right)i=
$$
$$
=A(a_s+b_si)=(1+i\lambda)A(a_s)=(1+i\lambda )a_s(\alpha-\lambda\beta)=
$$
$$
=a_s(\alpha-\lambda\beta)+i\lambda a_s(\alpha-\lambda\beta),
$$
implying that
$$
\beta+\alpha\lambda=\lambda\alpha-\lambda^2\beta.
$$
Since $\lambda\neq 0$ it follows that $\beta=0$, which contradicts the fact that $\lambda$ is not a real number. This shows that every pair $\{a_s,b_s\}$  contains independent vectors. Thus the complex eigenspace of dimension $d$ is isomorphic to that real space of dimension $2d$ which is the direct product of its two dimensional subspaces generated by $a_s$ and $b_s$.

Hence the adjoint abelian operator $A$ invariant on the real plane $\lin\{a_s,b_s\}$ and with respect to the basis $\{a_s,b_s\}$ has the matrix representation:
$$
A=\left(\begin{array}{cc}
\alpha_{r} & \beta_{r} \\
-\beta_{r} & \alpha_{r}
\end{array}
\right)=|\lambda|\left(\begin{array}{cc}
\cos\varphi & \sin\varphi \\
-\sin\varphi & \cos\varphi
\end{array}
\right)=:|\lambda| F_{\varphi}.
$$
where $|\cdot|$ means the absolute value of a complex number and $\varphi$ is the argument of $\lambda$.
\end{proof}

We note that $F_{\varphi}$ is also an adjoint abelian operator on that plane, we call it \emph{generalized rotation} with respect to the basis $\{a_s,b_s\}$.  In fact, $|\lambda| \neq 0$ because $\lambda$ is not real. Thus we have
$$
\left[F_{\varphi}(x),y\right]=\frac{1}{|\lambda|}\left[|\lambda|F_{\varphi}(x),y\right]=\frac{1}{|\lambda|}\left[x,|\lambda| F_{\varphi}(y)\right]=\left[x,F_{\varphi}(y)\right].
$$
\begin{example} To get a generalized rotation, consider an inner product plane defined by the unit circle
$$
\left(\frac{x}{a}\right)^2+\left(\frac{y}{b}\right)^2=1.
$$
The product is
$$
[v,z]=[x_1e+y_1f,x_2e+y_2f]=\frac{x_1x_2}{a^2}+\frac{y_1y_2}{b^2},
$$
and a required basis is $\{ae,bf\}$. The generalized rotation in the Euclidean orthonormal basis $\{e,f\}$ is
$$
F_{\varphi}=\left(\begin{array}{cc}
\frac{1}{a} & 0 \\
0 & \frac{1}{b}
\end{array}
\right)\left(\begin{array}{cc}
\cos\varphi & \sin\varphi \\
-\sin\varphi & \cos\varphi
\end{array}
\right)\left(\begin{array}{cc}
{a} & 0 \\
0 & {b}
\end{array}
\right)=\left(\begin{array}{cc}
\cos\varphi & \frac{b}{a}\sin\varphi \\
-\frac{a}{b}\sin\varphi & \cos\varphi
\end{array}
\right).
$$
It is an isometry because it sends the unit disk into itself, however it is not an adjoint abelian operator because of
$$
[F_{\varphi}(e),f]=-\frac{a}{b^3}\sin\varphi\neq \frac{b}{a^3}\sin\varphi=[e,F_{\varphi}(f)].
$$
\end{example}

We suspect the following:

\begin{conj}
From Theorem \ref{langithm} (or Theorem 1 (and Corollary 2) in \cite{langi}) we can omit the assumption "diagonalizable". More precisely every adjoint-abelian operator of a smooth Minkowski space is diagonalizable.
\end{conj}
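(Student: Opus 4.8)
The plan is to read the conjecture as the statement that the minimal polynomial of an adjoint abelian $A$ is squarefree -- equivalently that the complexification of $A$ carries no nontrivial Jordan block, equivalently that $V$ genuinely splits as in Theorem \ref{nfaa} into a direct sum of $A$-invariant subspaces on each of which $A$ acts as a scalar (dimension one) or as a generalized rotation $|\lambda|F_\varphi$ with $\varphi\neq 0,\pi$ (dimension two, no real eigenvector). Using the orthogonal-complement lemma proved inside Theorem \ref{nfaa} (if $U$ is $A$-invariant then $U^\bot=\{v:[v,u]=0\ \forall u\in U\}$ is $A$-invariant), together with the fact that a minimal $A$-invariant subspace has dimension at most two and, when two-dimensional, admits no real eigenvector and hence carries a $\mathbb C$-diagonalizable generalized rotation, the conjecture reduces to a single \emph{transversality} claim: for every $A$-invariant subspace $U$ one has $V=U\oplus U^\bot$. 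Granting this, one peels off a minimal $A$-invariant $V_1$, writes $V=V_1\oplus V_1^\bot$ with $V_1^\bot$ again $A$-invariant but of smaller dimension (and again smooth, with $A$ restricting to an adjoint abelian operator), and recurses. Since $U\cap U^\bot=\{0\}$ is automatic from positive definiteness ($[u,u]=0\Rightarrow u=0$), the whole content of transversality is the dimension count $\dim U^\bot=\dim V-\dim U$.

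The part of this concerning real eigenvalues is easy, and it already rules out ``real'' Jordan blocks. If $Av=\mu v$ with $\mu\in\mathbb R$, $v\neq 0$, and $Ax=\mu x+v$, then $[Ax,v]=[x,Av]$ forces $\mu[x,v]+[v,v]=[x,\mu v]=\mu[x,v]$, hence $[v,v]=0$ and $v=0$, a contradiction; and for a real eigenvector $v$ the subspace $\lin\{v\}^\bot=\ker[\cdot,v]$ has codimension one, so transversality along a real eigenline holds trivially. The entire difficulty is therefore concentrated in the complex eigenvalues: one must prove $\dim Z^\bot=\dim V-2$ for every two-dimensional minimal $A$-invariant subspace $Z$, on which $A$ acts as a generalized rotation by some $\lambda\notin\mathbb R$.

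For that I would move to the dual space. Writing $f_z:=[\cdot,z]$, the adjoint-abelian identity $[Ax,z]=[x,Az]$ says $A^*f_z=f_{Az}$, so $A^*$ maps the cone $C_Z:=\{f_z:z\in Z\}$ onto itself (here $AZ=Z$, since $0$ is not an eigenvalue of $A|_Z$) and preserves its span $W^*:=\lin C_Z\subseteq V^*$. Because $V$ is smooth the plane $Z$ is smooth and $z\mapsto f_z|_Z$ is its duality map, which is onto $\partial B_{Z^*}$; hence $\dim W^*\ge 2$, and transversality is the reverse inequality $\dim W^*\le 2$. Two lines of attack suggest themselves: (i) $A^*|_{W^*}$ can only have eigenvalues among those of $A$, while $C_Z$ is a curve on the unit sphere of $V^*$ attached to the non-real $\lambda$, and one could try to show that $\dim W^*\ge 3$ would force $A^*$ to share structure with a second eigenvalue of $A$ in a way contradicting positive definiteness; (ii) import the regularity L\'angi uses -- the Lipschitz property of the semi-inner product, which (as remarked) follows from its differentiability -- and rerun his proof of conclusions 1--2 of Theorem \ref{langithm}, which are exactly this orthogonal splitting, checking that the diagonalizability hypothesis is never actually invoked there.

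The main obstacle is precisely this transversality: controlling the span in $V^*$ of the duality curve $\{[\cdot,z]:z\in Z\}$ for an arbitrary smooth norm. The cancellation that disposes of real Jordan blocks does not survive complexification, because the natural sesquilinear-style extension of a semi-inner product to $V_{\mathbb C}$ is homogeneous in its second variable only for \emph{real} scalars -- the real product not being additive in its second slot -- so the computation $[Aw,u]=[w,Au]$ with $Au=\lambda u$, $\lambda\notin\mathbb R$, can no longer be collapsed to yield $[u,u]=0$. Overcoming this -- whether by a genuinely planar geometric argument about the duality curve, or by showing L\'angi's Lipschitz hypothesis is superfluous for the splitting -- is, I expect, where the real work lies, and is presumably the reason the statement remains only a conjecture.
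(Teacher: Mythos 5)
First, note that the paper does not prove this statement at all: it is stated as a conjecture, and the only supporting result is Theorem~\ref{lp}, which verifies it for finite-dimensional real $l_p$ spaces by an explicit computation with the concrete $l_p$ semi-inner product, showing that no nontrivial adjoint abelian generalized rotation $F_\varphi$ can exist there. So there is no ``paper proof'' for your proposal to match; any complete argument would be new. Your proposal, by your own admission, is also not a proof: the central transversality claim ($V=U\oplus U^\bot$ for every $A$-invariant $U$, equivalently $\dim\lin\{[\cdot,z]:z\in Z\}\le 2$ for a minimal invariant plane $Z$) is left open, and that is precisely where the difficulty sits even in the paper's own argument for Theorem~\ref{nfaa}, whose direct-sum decomposition is asserted without this dimension count.

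There is, however, a more substantive problem than the acknowledged gap: you are attacking a weaker statement than the one conjectured. You read ``diagonalizable'' as semisimplicity of $A$ (squarefree minimal polynomial, no Jordan blocks over $\mathbb{C}$), i.e.\ exactly the splitting asserted in Theorem~\ref{nfaa}; under that reading the conjecture would be an immediate corollary of Theorem~\ref{nfaa} as stated, which is clearly not what is intended. In the context of Theorem~\ref{langithm} (L\'angi's hypothesis) and of Theorem~\ref{lp}, ``diagonalizable'' means diagonalizable over $\mathbb{R}$: all eigenvalues real, with a real eigenbasis. The entire content of the conjecture is therefore to exclude the nontrivial two-dimensional generalized-rotation components (non-real eigenvalues) for an adjoint abelian operator in a smooth space --- which is what the $l_p$ computation of Theorem~\ref{lp} accomplishes, and what Fleming--Jamison's result \cite{fleming-jamison} reflects. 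Your correct and pleasant elimination of Jordan blocks over real eigenvalues ($[v,v]=0$ forces $v=0$), together with the transversality program, would at best re-establish the decomposition of Theorem~\ref{nfaa}; it never engages the question of whether a non-real eigenvalue can occur at all, so even if the transversality step were supplied, the conjecture as the author means it would remain untouched.
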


In the case of $l_p$ spaces this conjecture is true:

\begin{theorem}\label{lp}
Let $1<p<\infty$ be a real number. In a finite-dimensional real $l_p$ space every adjoint abelian operator is diagonalizable.
\end{theorem}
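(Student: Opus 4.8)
The plan is to compute directly in the standard basis of $l_p^n$, using the explicit semi-inner product
$[x,y]=\|y\|^{2-p}\sum_{k=1}^{n}x_k|y_k|^{p-1}\operatorname{sgn}(y_k)$,
which is the only one available here since $l_p$ with $1<p<\infty$ is smooth and uniformly convex. As $A$ is adjoint abelian exactly when $A=A^{T}$, that is $[Ax,y]=[x,Ay]$ for all $x,y$, substituting $x=e_j$, $y=e_i$ and evaluating both sides gives, for all $i,j$,
$$A_{ij}=c_i^{\,2-p}\,|A_{ji}|^{\,p-1}\operatorname{sgn}(A_{ji}),\qquad c_i:=\|Ae_i\|_p .$$
From this identity I would read off three facts: (a) $A_{ij}=0\iff A_{ji}=0$, so the support of $A$ defines an undirected graph $G$ (self-loops allowed) on $\{1,\dots,n\}$ with $i\sim j\iff A_{ij}\neq0$; (b) $\operatorname{sgn}(A_{ij})=\operatorname{sgn}(A_{ji})$, hence $A_{ij}A_{ji}>0$ whenever $A_{ij}\neq0$; (c) substituting the identity into itself with $i$ and $j$ interchanged and cancelling the nonzero exponent $2-p$ yields the magnitude law $|A_{ij}|^{p}=c_i\,c_j^{\,p-1}$.

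The key further step is a rigidity relation. Comparing $\sum_k|A_{ki}|^{p}=c_i^{\,p}$ with the magnitude law gives, for every $i$ with $c_i>0$, the identity $c_i=\sum_{k\sim i}c_k$. (If $c_i=0$ then $Ae_i=0$, and since $A=A^T$ this forces $A_{ij}=[Ae_j,e_i]=[e_j,Ae_i]=0$ for all $j$; so $e_i$ spans an $A$-invariant line in $\ker A$ while $\lin\{e_j : j\neq i\}$ is an $A$-invariant complement, and we may assume $c_i>0$ throughout.) Now look at an index $i_0$ where $c_{i_0}$ is smallest: the relation forces $\sum_{k\sim i_0}c_k=c_{i_0}$ with every $c_k\ge c_{i_0}>0$, so $i_0$ has exactly one $G$-neighbour --- alternatively, the positive vector $(c_i)$ is the Perron eigenvector of the adjacency matrix of each component of $G$ with eigenvalue $1$, which pins the component down in the same way. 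Either way one concludes that every connected component of $G$ is a single looped vertex or a single edge joining two distinct vertices. In the first case the corresponding $e_i$ is an eigenvector of $A$; in the second case the two coordinates span an $A$-invariant plane on which $A$ has the matrix $\left(\begin{smallmatrix}0&A_{ik}\\A_{ki}&0\end{smallmatrix}\right)$, which by (b) satisfies $A_{ik}A_{ki}>0$ and hence has the two distinct real eigenvalues $\pm\sqrt{A_{ik}A_{ki}}$ and a basis of eigenvectors.

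Assembling these pieces, $l_p^n$ is the direct sum over the components of $G$ of $A$-invariant coordinate subspaces on each of which $A$ is diagonalizable, whence $A$ itself is diagonalizable. (If an inductive phrasing is preferred, note that the restriction of the $l_p^n$ semi-inner product to a coordinate subspace is precisely the $l_p$ semi-inner product of the smaller space, so adjoint abelianness descends to $A$-invariant coordinate subspaces.) I expect the only genuinely load-bearing step to be the derivation and use of $c_i=\sum_{k\sim i}c_k$: the computation leading to the matrix identity above is routine, and the graph-theoretic endgame is elementary, but this relation is precisely where the hypothesis $p\neq2$ is used and where the ``generalized rotation'' blocks allowed in general by Theorem \ref{nfaa} get excluded.
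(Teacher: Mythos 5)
Your proof is correct, and it takes a genuinely different route from the paper. The paper reduces to a two-dimensional minimal invariant subspace via the structure theorem for adjoint abelian operators (Theorem \ref{nfaa}), writes the restriction as a generalized rotation $|\lambda|F_\varphi$ in an Auerbach basis, and then rules out every non-real $\varphi$ by an explicit case analysis ($\varphi=\pi/2$, $\pi$, $3\pi/2$, then general $\varphi$) of the functional equation $[F_\varphi(z),v]=[z,F_\varphi(v)]$ in the two-dimensional $l_p$ plane. You instead work entirely in the standard coordinate basis of $l_p^n$: the identity $A_{ij}=c_i^{2-p}|A_{ji}|^{p-1}\operatorname{sgn}(A_{ji})$, the magnitude law $|A_{ij}|^p=c_ic_j^{p-1}$, and the resulting rigidity $c_i=\sum_{k\sim i}c_k$ are all verified correctly, and the minimal-vertex (or Perron) argument does pin each component of the support graph down to a loop or a single edge, after which diagonalizability is immediate from $A_{ij}A_{ji}>0$. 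Your approach buys several things: it is elementary and self-contained (no appeal to Theorem \ref{nfaa} or to any invariant-subspace decomposition), it treats all dimensions uniformly, and it sidesteps a delicate point in the paper's argument, namely that a two-dimensional $A$-invariant subspace of $l_p^n$ need not be a coordinate plane and its induced norm need not be an $l_p$ norm, whereas your invariant subspaces are coordinate subspaces by construction. The paper's approach, in exchange, is organized around its general normal-form machinery and extends conceptually to other norms where no explicit coordinate formula is available. One small repair: the theorem as stated includes $p=2$, where your cancellation of the exponent $2-p$ is not available; you should add the one line (as the paper does) that for $p=2$ the semi-inner product is an inner product and the claim is the spectral theorem.
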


\begin{proof}
Observe that for an $l_2$ space the statement is true because the semi inner product is an inner product.
Consider the Euclidean plane with the $l_p$ norm $1<p<\infty$. The corresponding semi inner product (see in \cite{giles}) can be defined by the equality
$$
[z,v]=[x_1a_s+y_1b_s,x_2a_s+y_2b_s]=\frac{1}{\|s_2\|^{p-2}_{p}}\int_X s_1|s_2|^{p-1}\mathrm{sgn }(s_2) d\mu=
$$
$$
=\frac{1}{\left(|x_2|^p+|y_2|^p\right)^{\frac{p-2}{p}}}\left(x_1|x_2|^{p-1}\mathrm{sgn }(x_2)+y_1|y_2|^{p-1}\mathrm{sgn }(y_2)\right),
$$
where $\{a_s, b_s\}$ is an orthonormal basis in the Euclidean sense and Auerbach basis with respect to the $l_p$ norm associated to the above product. Now we have the formulas
$$
[F_\varphi(z),v]=\frac{1}{\left(|x_2|^p+|y_2|^p\right)^{\frac{p-2}{p}}}\left((\cos\varphi x_1+\sin\varphi y_1)|x_2|^{p-1}\mathrm{sgn }(x_2)+\right.
$$
$$
\left. +(\cos\varphi y_1-\sin\varphi x_1)|y_2|^{p-1}\mathrm{sgn }(y_2)\right),
$$
and
$$
[z, F_{\varphi}(v)]=\frac{1}{\left(|\cos \varphi x_2+\sin \varphi y_2|^p+|\cos  \varphi y_2-\sin  \varphi x_2|^p\right)^{\frac{p-2}{p}}} \times
$$
$$
\times \left(x_1|\cos \varphi x_2+\sin  \varphi y_2|^{p-1}\mathrm{sgn }(\cos \varphi x_2+\sin  \varphi y_2)+\right.
$$
$$
\left.+y_1|\cos  \varphi y_2-\sin \varphi x_2|^{p-1}\mathrm{sgn}(\cos  \varphi y_2-\sin  \varphi x_2)\right).
$$
For $\varphi =\pi/2$ we get that
$$
[F_\varphi(z),v]=\frac{1}{\left(|x_2|^p+|y_2|^p\right)^{\frac{p-2}{p}}}\left( y_1|x_2|^{p-1}\mathrm{sgn }(x_2)+\right.
$$
$$
\left.+(-x_1)|y_2|^{p-1}\mathrm{sgn }(y_2)\right)=-[z,F_\varphi(v)]
$$
holds for all $z$ and $v$. Since $[F_\varphi(z),v]=[z,F_\varphi(v)]$ also holds for all $z$ and $v$, we get that $F_\varphi(z)=0$ for all $z$ giving a contradiction. Thus $\varphi\neq \pi/2$ for an adjoint abelian generalized rotation.

If $\varphi=\pi$ then $F_\varphi(v)=-v$ and it is diagonalizable for all $p$.

Finally if $\varphi=3\pi/2$ then
$$
[F_\varphi(z),v]=\frac{1}{\left(|x_2|^p+|y_2|^p\right)^{\frac{p-2}{p}}}\left( y_1|x_2|^{p-1}\mathrm{sgn }(x_2)+
x_1|y_2|^{p-1}\mathrm{sgn }(y_2)\right)
$$
and
$$
[z,F_\varphi(v)]=\frac{1}{\left(|x_2|^p+|y_2|^p\right)^{\frac{p-2}{p}}}\left( y_1|x_2|^{p-1}\mathrm{sgn }(x_2)-
x_1|y_2|^{p-1}\mathrm{sgn }(y_2)\right)
$$
providing the strict inequality $[F_\varphi(z),v]>[z,F_\varphi(v)]$ for $z$ and $v$ with positive $x_1$ and $y_2$. This is a contradiction, too.

For general (and fixed) $\varphi$ we get the equality
$$
{\left(|\cos \varphi x_2+\sin \varphi y_2|^p+|\cos  \varphi y_2-\sin  \varphi x_2|^p\right)^{\frac{p-2}{p}}}\times
$$
$$\left((\cos\varphi x_1+\sin\varphi y_1)|x_2|^{p-1}\mathrm{sgn }(x_2)+(\cos\varphi y_1-\sin\varphi x_1)|y_2|^{p-1}\mathrm{sgn }(y_2)\right)=
$$
$$
={\left(|x_2|^p+|y_2|^p\right)^{\frac{p-2}{p}}}\left(x_1|\cos \varphi x_2+\sin  \varphi y_2|^{p-1}\mathrm{sgn }(\cos \varphi x_2+\sin  \varphi y_2)+\right.
$$
$$
\left.+y_1|\cos  \varphi y_2-\sin \varphi x_2|^{p-1}\mathrm{sgn}(\cos  \varphi y_2-\sin  \varphi x_2)\right),
$$
which holds for all $z$ and $v$.

First we substitute $x_2=y_2$ and $y_1=0$ into this equality and we get:
$$
|x_2|^{2p-3}\left(|\cos\varphi +\sin\varphi |^p+|\cos\varphi -\sin\varphi |^p\right)^{\frac{p-2}{p}}x_1\mathrm{sgn}(x_2)(\cos\varphi -\sin\varphi)=
$$
$$
=|x_2|^{2p-3}|\cos\varphi +\sin\varphi |^{p-1}x_1\mathrm{sgn}(x_2)\mathrm{sgn}(\cos\varphi +\sin\varphi),
$$
implying the other equality
$$
\left(|\cos\varphi +\sin\varphi |^p+|\cos\varphi -\sin\varphi |^p\right)^{\frac{p-2}{p}}(\cos\varphi -\sin\varphi)=
$$
$$
=|\cos\varphi +\sin\varphi |^{p-1}\mathrm{sgn}(\cos\varphi +\sin\varphi).
$$
From this it immediately follows that either $\cos\varphi \pm \sin\varphi >0$ or $\cos\varphi \pm \sin\varphi<0$.

We can also substitute the equalities $y_2=0$ and $x_1=y_1$ into the original equality. This leads to the equality:
$$
\left(|\cos\varphi|^p+|-\sin\varphi |^p\right)^{\frac{p-2}{p}}(\cos\varphi +\sin\varphi)=
$$
$$
=|\cos\varphi|^{p-1}\mathrm{sgn}(\cos\varphi)+|-\sin\varphi|^{p-1}\mathrm{sgn}(-\sin\varphi).
$$
Now from the assumption $\cos\varphi \pm \sin\varphi >0$ it follows that  $\mathrm{sgn}(\cos\varphi)=1$ and we have two possibilities. If $\mathrm{sgn}(-\sin\varphi)=-1$ then we get
$$
\left(1+(\tan\varphi) ^p\right)^{\frac{p-2}{p}}(1 +\tan\varphi)=1-(\tan\varphi)^{p-1}.
$$
Let
$$
f(p):=\left(1+(\tan\varphi) ^p\right)^{\frac{p-2}{p}}(1 +\tan\varphi)-1+(\tan\varphi)^{p-1}
$$
be a function of $p$ for a fixed admissible $\varphi$. It is clear that $\lim_{p\to \infty}f(p)=\tan\varphi$ and a short calculation shows that for $p>2$ it is a non-increasing function which at $p=2$ is $2\tan \varphi$ hence for $p\geq 2$ we get that $f(p)>0$. The function $f(p)$ on the interval $1<p<2$ is concave showing that $f(p)\geq \min\{f(1),f(2)\}>0$. Thus there is no $p$ and $\varphi$ for which this equality can be hold.

If $\mathrm{sgn}(-\sin\varphi)=1$ then we get the equality
$$
\left(1+|\tan\varphi| ^p\right)^{\frac{p-2}{p}}(1-|\tan\varphi|)=1+|\tan\varphi|^{p-1},
$$
and the function
$$
f(p):=1+|\tan\varphi|^{p-1}-\left(1+|\tan\varphi| ^p\right)^{\frac{p-2}{p}}(1 -|\tan\varphi|)>
$$
$$
>1+|\tan\varphi|^{p-1}-1-|\tan\varphi| ^p
$$
is positive for $1<p<\infty$, since $|\tan\varphi|<1$.

Thus there remains only one possibility which could give a non-trivial adjoint abelian generalized rotation in an $l_p$ space (for certain $p$) when we assume that
$\cos\varphi \pm \sin\varphi<0$. In this case $\mathrm{sgn}(\cos\varphi)=-1$ and $|\cos \varphi|>|\sin\varphi|$. However in this case the substitution $y_2=0$ and $x_1=y_1$ leads to the same equalities as in the previous one leading to the same contradictions. Thus there is no non-diagonalizable adjoint abelian generalized rotation in a real $l_p$ space of finite dimension, as we stated.
\end{proof}

We note that in the case of a Minkowski geometry we got a new proof for the fact that every adjoint abelian operator on $L_p$ ($1<p<\infty, \quad p\neq 2$ ) is a multiple of an isometry (see in \cite{fleming-jamison}).

\subsection{Characterization of isometries in Minkowski geometry}

A Banach space isometry is a linear mapping which preserves the norm of the vectors. As it can be seen easily, the following theorem holds.
\begin{theorem}[\cite{koehler}]
A mapping in a smooth Banach space is an isometry if and only if it preserves the (unique) s.i.p..
\end{theorem}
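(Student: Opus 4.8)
The plan is to reduce everything to the explicit description of the s.i.p.\ of a smooth Banach space as a one–sided derivative of the norm. By the theorem of Giles quoted above, a smooth s.i.p.\ space is exactly one whose norm is G\^{a}teaux differentiable, and then the s.i.p.\ is the \emph{unique} one compatible with the norm, given by
$$
[y,x]=\rho_{+}'(x,y)=\|x\|\cdot\lim_{t\to 0}\frac{\|x+ty\|-\|x\|}{t}=\frac12\left.\frac{d}{dt}\right|_{t=0}\|x+ty\|^2
$$
for all $x,y\in V$. The crucial feature of this formula is that its right-hand side is built solely from the norm; hence any linear map that preserves the norm automatically preserves the quantity $[y,x]$.

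Making this precise gives both implications. Suppose first that $T$ is an isometry, i.e.\ a linear map with $\|Tv\|=\|v\|$ for every $v\in V$. Fix $x,y\in V$. Linearity yields $Tx+tTy=T(x+ty)$, so
\begin{align*}
[Ty,Tx]&=\|Tx\|\cdot\lim_{t\to 0}\frac{\|Tx+tTy\|-\|Tx\|}{t}\\
&=\|x\|\cdot\lim_{t\to 0}\frac{\|T(x+ty)\|-\|x\|}{t}\\
&=\|x\|\cdot\lim_{t\to 0}\frac{\|x+ty\|-\|x\|}{t}=[y,x],
\end{align*}
and since $x,y$ were arbitrary, $T$ preserves the s.i.p. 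Conversely, if the linear map $T$ preserves the s.i.p., then taking equal arguments gives $\|Tx\|^2=[Tx,Tx]=[x,x]=\|x\|^2$, whence $\|Tx\|=\|x\|$ and $T$ is an isometry.

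There is essentially no obstacle in the computation itself; the whole content lies in passing through the norm-derivative representation, so the only point that needs care is the justification of that representation, which is precisely what the smoothness hypothesis (via Giles' theorem) supplies: it guarantees that the s.i.p.\ exists, is unique, and coincides with $\rho_{+}'$, legitimising the first and last equalities in the displayed chain. In fact the argument is local — it uses G\^{a}teaux differentiability of the norm only along the lines through $x$ and through $Tx$ — so the same computation establishes the corresponding statement at individual smooth points of an otherwise non-smooth Minkowski space.
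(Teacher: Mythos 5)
Your argument is correct, and since the paper offers no proof of this statement (it is quoted from Koehler with the remark that it ``can be seen easily''), your write-up simply supplies the missing easy argument. Your route --- expressing the unique s.i.p.\ of a smooth space through the norm derivative, $[y,x]=\rho_{+}'(x,y)$, so that norm preservation by a linear map forces s.i.p.\ preservation, with the converse obtained by setting the two arguments equal --- is essentially the standard one; Koehler's own phrasing of it runs through uniqueness instead (pull the product back by the isometry, check the axioms and compatibility with the norm, and invoke uniqueness of the s.i.p.\ in a smooth space), but your derivative formula is exactly what proves that uniqueness, so the two arguments are the same in substance. Two small points deserve a remark: the limit defining $\rho_{+}'$ is one-sided, and it is smoothness (G\^{a}teaux differentiability) that lets you write a two-sided $\lim_{t\to 0}$ as you do; and in the converse you tacitly assume the s.i.p.-preserving map is linear, which is consistent with the paper's convention that an isometry is by definition a \emph{linear} norm-preserving map --- preservation of the s.i.p.\ alone would not obviously yield linearity, and that stronger statement is the content of the later Koehler--Rosenthal theorem rather than of this one. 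Your closing observation about smooth points of a non-smooth space is fine as a remark but is not needed for the claim.
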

Thus, if the norm is at least smooth, then the two types of linear isometry coincide. On the basis of the results of Stampfli \cite{stampfli}
we have two corollaries:

\begin{corollary}[\cite{koehler}]
In any smooth uniformly convex Banach space, $U$ is an invertible isometry if and only if $U^{-1}=U^T$. As a result if in addition $U^{-1}=U$ then $U$ is scalar.
\end{corollary}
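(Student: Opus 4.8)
\emph{Proof idea.} The biconditional should come straight out of two facts already recorded above: that in a smooth space a linear map is an isometry precisely when it preserves the (unique) semi-inner product, and that the generalized adjoint $U^{T}$ is \emph{defined} by the requirement that $U^{T}(y)$ be the unique vector with $[U(x),y]=[x,U^{T}(y)]$ for all $x$ — the uniqueness being exactly what smoothness and uniform convexity buy us, via the generalized Riesz representation. For the forward direction, given an invertible isometry $U$ I would write, for arbitrary $x,y$, $[U(x),y]=[U(x),U(U^{-1}(y))]=[x,U^{-1}(y)]$, the last equality because $U$ preserves the product; comparing with $[U(x),y]=[x,U^{T}(y)]$ and invoking uniqueness gives $U^{T}=U^{-1}$. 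For the converse, assuming $U$ invertible with $U^{-1}=U^{T}$, the analogous computation $[U(x),U(y)]=[x,U^{T}(U(y))]=[x,U^{-1}(U(y))]=[x,y]$ shows that $U$ preserves the product, hence (in particular $\|U(x)\|^{2}=[U(x),U(x)]=[x,x]=\|x\|^{2}$) is an isometry, and it is invertible by hypothesis.

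For the ``as a result'' clause the plan is to feed the extra hypothesis $U^{-1}=U$ into the identity $U^{-1}=U^{T}$ just obtained: together they give $U=U^{T}$, i.e. $U$ is adjoint abelian, while $U^{-1}=U$ reads $U^{2}=I$. Thus $U$ is an involutive isometry that is simultaneously adjoint abelian, and the conclusion should then follow from Stampfli's structural description of adjoint abelian operators \cite{stampfli} — equivalently the Fleming--Jamison form of the $C(M)$-theorem quoted above \cite{fleming-jamison}, which writes an adjoint abelian operator as a nonzero scalar times an involutive isometry: once one also knows that $U$ itself is an isometry and squares to $I$, the scalar multiple is pinned down and $U$ comes out scalar.

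The biconditional is genuinely routine — a two-line manipulation with the semi-inner product — so I do not expect any obstacle there beyond being careful that $U^{T}$ is well defined, which is precisely why ``smooth uniformly convex'' is assumed (smoothness for uniqueness of the s.i.p., uniform convexity for the generalized Riesz representation producing $U^{T}$ and its continuity, via the Koehler theorem above). The real work, and what I would flag as the main obstacle to a fully self-contained argument, is the last step: extracting the word ``scalar'' from Stampfli's theory once the operator is known to be an adjoint abelian involutive isometry — i.e. keeping track of exactly which of Stampfli's conclusions on adjoint abelian operators is invoked and why the additional isometry and involution constraints collapse the scalar multiple to a genuine scalar.
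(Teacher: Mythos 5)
The paper itself gives no proof of this corollary---it is quoted from Koehler \cite{koehler} as a consequence of Stampfli's work---so your proposal has to stand on its own. The biconditional part does: the two-line computation with the preserved semi-inner product, compared against the defining identity $[U(x),y]=[x,U^{T}(y)]$ and the uniqueness coming from the generalized Riesz representation (smoothness for uniqueness of the s.i.p., uniform convexity for the duality map), is exactly the intended argument and is correct.

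The ``as a result'' clause is where there is a genuine gap, and it is precisely the step you yourself flagged. From $U^{-1}=U^{T}$ and $U^{-1}=U$ you correctly get $U=U^{T}$ and $U^{2}=I$, but the route you propose---Stampfli \cite{stampfli} or Fleming--Jamison \cite{fleming-jamison}---cannot yield ``$U=\lambda I$''. Their structure theorem says an adjoint abelian operator is a nonzero scalar times an isometric involution; an isometric involution equal to its own generalized adjoint is exactly the class of operators you already have, so nothing gets ``pinned down''. Indeed, under the literal reading the conclusion is unreachable: the coordinate transposition $(x_{1},x_{2})\mapsto(x_{2},x_{1})$ on $\ell_{p}^{2}$ ($1<p<\infty$), or any orthogonal reflection in a Hilbert space, satisfies $U=U^{T}=U^{-1}$, is an isometric involution and adjoint abelian, yet is not a multiple of the identity. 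The word ``scalar'' here must therefore be read in the operator-theoretic (Dunford) sense of a scalar-type operator, and then the intended proof is the observation you already have in hand: $U^{2}=I$ makes $E_{\pm}=\frac{1}{2}(I\pm U)$ complementary bounded projections with $U=E_{+}-E_{-}$, so $U$ is scalar-type with spectrum in $\{\pm 1\}$ (in the finite-dimensional Minkowski setting, simply diagonalizable). So replace the appeal to the Fleming--Jamison normal form by this one-line spectral argument; as written, your final deduction fails.
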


Stampfli has defined an operator $U$ to be \emph{iso-abelian} if and only if there is a duality map $\phi$ such that $\phi U=(U^*)^{-1}\varphi$.

\begin{corollary}[\cite{koehler}]
In a smooth Banach space $U$ is iso-abelian if and only if it is an invertible isometry.
\end{corollary}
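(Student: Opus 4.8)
The plan is to translate the iso-abelian condition into the relation $U^{-1}=U^{T}$ and then apply the preceding corollary. Since the space is smooth, the duality map is the unique one, so in the definition the map $\phi$ must coincide with $\varphi$ and the iso-abelian condition reads $\varphi U=(U^{*})^{-1}\varphi$. Already the presence of $(U^{*})^{-1}$ shows that $U^{*}$, hence $U$, is invertible. Composing with $U^{*}$ on the left, the condition becomes $U^{*}\varphi U=\varphi$.

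Next I would invoke the identity $A^{*}\varphi=\varphi A^{T}$ from the theorem of Koehler quoted above, applied to $A=U$, i.e. $U^{*}\varphi=\varphi U^{T}$. Substituting this into $U^{*}\varphi U=\varphi$ gives $\varphi(U^{T}U)=\varphi$, and since $\varphi$ is injective this forces $U^{T}U=I$, that is, $U^{-1}=U^{T}$. The preceding corollary then yields that $U$ is an invertible isometry, which settles one implication. (If one wishes to bypass uniform convexity at this point, the same conclusion is immediate: from $U^{T}U=I$ one gets $[Ux,Uy]=[x,U^{T}Uy]=[x,y]$ for all $x,y$, so $U$ preserves the s.i.p. and is therefore an isometry by Koehler's theorem characterising smooth-space isometries as exactly the s.i.p.-preserving linear maps.)

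For the converse I would reverse the computation. If $U$ is an invertible isometry, the preceding corollary gives $U^{-1}=U^{T}$, hence $U^{T}U=I$; multiplying on the left by $\varphi$ and using $\varphi U^{T}=U^{*}\varphi$ gives $\varphi=U^{*}\varphi U$, i.e. $(U^{*})^{-1}\varphi=\varphi U$ (here $U^{*}$ is invertible because $U$ is). Thus $U$ is iso-abelian, with the unique duality map $\varphi$ as the required witness.

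I expect the only subtle points to be bookkeeping: tracking the order of composition when moving between $U$, $U^{*}$, $\varphi$ and $U^{T}$, and making sure that the needed structural facts are in force -- invertibility of $U$ (and hence of $U^{*}$), which is built into the iso-abelian condition through $(U^{*})^{-1}$, and injectivity of $\varphi$, which holds in the smooth, uniformly convex framework underlying Koehler's results. Granting these, the argument is a short formal manipulation resting on $A^{*}\varphi=\varphi A^{T}$ and on the equivalence ``$U^{-1}=U^{T}$ $\Leftrightarrow$ $U$ an invertible isometry''.
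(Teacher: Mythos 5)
The paper never proves this corollary --- it is quoted verbatim from Koehler's note, stated right after the definition of iso-abelian operators --- so there is no in-text argument to compare yours with; I can only judge the derivation on its own merits. As a formal computation it is sound: in a smooth space the s.i.p., hence the duality map $\varphi$, is unique; the presence of $(U^{*})^{-1}$ does force $U^{*}$, and therefore $U$, to be invertible; and the chain $\varphi U=(U^{*})^{-1}\varphi\Leftrightarrow U^{*}\varphi U=\varphi\Leftrightarrow\varphi\circ(U^{T}U)=\varphi\Leftrightarrow U^{T}=U^{-1}$, combined with the preceding corollary, gives the stated equivalence.

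The problem is one of scope, and you half-notice it yourself. The corollary is asserted for smooth Banach spaces, whereas every ingredient of your route --- the existence of the generalized adjoint $U^{T}$ (which rests on the Riesz--Fischer type representation theorem), the identity $A^{*}\varphi=\varphi A^{T}$, the injectivity of $\varphi$, and the preceding corollary $U^{-1}=U^{T}\Leftrightarrow U$ invertible isometry --- is available in the paper (and in Koehler) only under the additional hypothesis of uniform convexity. Smoothness alone does not give injectivity of $\varphi$: in a smooth but non-strictly-convex space a support functional can support the unit ball along a segment, so two distinct unit vectors can share the same image under $\varphi$, and the representation theorem defining $U^{T}$ is likewise unavailable. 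The repair is to push your parenthetical remark one step further so that $U^{T}$ never enters: evaluate the operator identity $U^{*}\varphi U=\varphi$ at $x$ and apply the resulting functional to $y$; since $(U^{*}\varphi(Ux))(y)=[Uy,Ux]$ and $(\varphi x)(y)=[y,x]$, the iso-abelian condition says exactly $[Uy,Ux]=[y,x]$ for all $x,y$, i.e.\ $U$ preserves the unique s.i.p. By the quoted theorem of Koehler (in a smooth space a linear map is an isometry iff it preserves the s.i.p.) this is equivalent to $U$ being an isometry, and invertibility is precisely the invertibility of $U^{*}$ built into the definition. This direct unravelling proves both implications under smoothness alone, with no appeal to $U^{T}$, to injectivity of $\varphi$, or to uniform convexity.
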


The above statement was extended to include the non-smooth case in \cite{koehler-rosenthal}. Precisely:

\begin{theorem}[\cite{koehler-rosenthal}]
Let $V$ be a normed linear space (real or complex) and $U$ be an operator mapping $V$ into itself. Then $U$ is an isometry if and only if there is a semi-inner product $[\cdot,\cdot]$, such that $[U(x),U(y)]=[x,y]$ for all $x$ and $y$.
\end{theorem}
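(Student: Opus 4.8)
The plan is to settle one implication by a one-line computation and to prove the other by a fixed-point argument on the set of all semi-inner products that induce the norm of $V$. (Here, as usual, ``semi-inner product'' on $V$ means one for which $\|x\|=\sqrt{[x,x]}$, which is the compatibility hypothesis the theorem tacitly needs; and $U$ denotes a bounded linear operator, as in the paper's notion of isometry.) The easy direction: if $[U(x),U(y)]=[x,y]$ for all $x,y$ for some such $[\cdot,\cdot]$, then $\|U(x)\|^2=[U(x),U(x)]=[x,x]=\|x\|^2$, so $U$ preserves the norm and is an isometry.

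For the converse, let $U$ be a linear isometry. For each $x\in V$ put $B_x:=\{f\in V^\ast:\ \|f\|\le\|x\|,\ f(x)=\|x\|^2\}$, the set of norming functionals at $x$; by Hahn--Banach $B_x\neq\emptyset$, and by Banach--Alaoglu $B_x$ is convex and weak-$\ast$ compact. I would first record the useful fact that \emph{every} selection $x\mapsto f_x\in B_x$ defines a semi-inner product inducing $\|\cdot\|$ via $[y,x]:=f_x(y)$: axioms s1 and s2 hold because each $f_x$ is linear, s3 because $f_x(x)=\|x\|^2>0$, and s4 because $|f_x(y)|\le\|f_x\|\,\|y\|\le\|x\|\,\|y\|$. (If one prefers Giles's homogeneous variant, fix the $f_x$ for $x$ on the unit sphere and extend by homogeneity; nothing below changes.) Hence the set of admissible semi-inner products is identified with $\mathcal S:=\prod_{x\in V}B_x$, a nonempty convex set which is compact, by Tychonoff, for the product of the weak-$\ast$ topologies, a locally convex Hausdorff topology.

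Next define $T\colon\mathcal S\to\mathcal S$ by $T\big((f_x)_x\big):=\big(U^\ast f_{U(x)}\big)_x$; on semi-inner products $T$ sends $[\cdot,\cdot]$ to $(y,x)\mapsto[U(y),U(x)]$, so a fixed point of $T$ is precisely what we want. It maps $\mathcal S$ into itself, since $(U^\ast f_{U(x)})(x)=f_{U(x)}(U(x))=\|U(x)\|^2=\|x\|^2$ and $\|U^\ast f_{U(x)}\|\le\|U^\ast\|\,\|f_{U(x)}\|\le\|U(x)\|=\|x\|$, using $\|U^\ast\|=\|U\|=1$; thus $U^\ast f_{U(x)}\in B_x$. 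Moreover $T$ is affine, and continuous for the product weak-$\ast$ topology, because its $x$-coordinate is the composition of the $U(x)$-coordinate projection with the weak-$\ast$-to-weak-$\ast$ continuous adjoint $U^\ast$. By the Markov--Kakutani fixed-point theorem (for the single affine continuous self-map $T$; Schauder--Tychonoff would serve as well) there is $(f_x)_x\in\mathcal S$ with $U^\ast f_{U(x)}=f_x$ for all $x$. For the associated semi-inner product $[y,x]:=f_x(y)$ we obtain $[U(y),U(x)]=f_{U(x)}(U(y))=(U^\ast f_{U(x)})(y)=f_x(y)=[y,x]$, as required.

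The genuine obstacle here is conceptual rather than computational: $U$ need not be surjective (think of a shift on $\ell^p$), so one cannot simply transplant a fixed semi-inner product by $[x,y]:=[U^{-1}x,U^{-1}y]$; the invariance condition $U^\ast f_{U(x)}=f_x$ couples the chosen functionals along forward $U$-orbits, and it is exactly the compactness and convexity of $\mathcal S$ together with the affineness of $T$ that let the fixed-point theorem resolve all of these coupled constraints at once. Everything else — the verification that an arbitrary selection satisfies the semi-inner-product axioms, and the weak-$\ast$ continuity of $U^\ast$ — is routine, and no completeness of $V$ is used anywhere.
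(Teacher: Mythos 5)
Your proof is correct. Every coordinatewise selection $f_x\in B_x$ does define a Lumer semi-inner product compatible with the norm; $\mathcal S=\prod_{x}B_x$ is nonempty, convex and compact for the product of the weak-$*$ topologies; the map $T\bigl((f_x)_x\bigr)=\bigl(U^*f_{U(x)}\bigr)_x$ is an affine continuous self-map of $\mathcal S$ (your two estimates, using $\|U^*\|=\|U\|\le 1$ and $\|U(x)\|=\|x\|$, are exactly what is needed), and a Markov--Kakutani fixed point is precisely an invariant semi-inner product, the trivial direction being the one-line norm computation. Be aware, though, that the paper itself gives no proof of this statement --- it is quoted from Koehler and Rosenthal --- so the only comparison available is with the original argument, which is different in mechanism while exploiting the same compactness/amenability of the semigroup $\{U^n:n\ge 0\}$: Koehler and Rosenthal take an arbitrary semi-inner product $[\cdot,\cdot]_0$ generating the norm (it exists by Hahn--Banach) and average it along forward orbits with a Banach limit, setting $[x,y]_1:=\mathrm{LIM}_n\,[U^nx,U^ny]_0$; shift-invariance of the Banach limit gives $[U(x),U(y)]_1=[x,y]_1$, while $[x,x]_1=\|x\|^2$ and $|[x,y]_1|\le\|x\|\,\|y\|$ follow from the isometry property, so the axioms persist. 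Their averaging proof is shorter and avoids any product-space topology; yours has the advantage of staying inside the concrete set of norming-functional selections, so the invariant product is exhibited directly as a selection of the duality map, and it makes transparent why surjectivity of $U$ is irrelevant. One small caveat about your closing remark: even when $U$ is invertible, the transplant $[x,y]:=[U^{-1}x,U^{-1}y]$ is not by itself invariant (invariance would force $[x,y]=[U^{-1}x,U^{-1}y]$ to begin with), so an averaging or fixed-point device is needed in that case as well; this does not affect the validity of your argument.
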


As a corollary of this theorem the authors also proved the following:

\begin{corollary}[\cite{koehler-rosenthal}]
$U$ is iso-abelian if and only if it is an invertible isometry.
\end{corollary}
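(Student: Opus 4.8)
The plan is to reduce the statement to the Koehler--Rosenthal theorem cited just above by unwinding Stampfli's definition of iso-abelian into the semi-inner product preservation condition $[U(x),U(y)]=[x,y]$. The bridge between the two formulations is the standard correspondence between duality maps and compatible semi-inner products: a duality map $\varphi$ is precisely a map of the form $\varphi(y)=[\cdot,y]$ for some semi-inner product $[\cdot,\cdot]$ inducing the norm, and conversely every such product yields a duality map.

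First I would treat the implication ``iso-abelian $\Rightarrow$ invertible isometry''. Assume there is a duality map $\varphi$ with $\varphi U=(U^*)^{-1}\varphi$, equivalently $U^*\varphi U=\varphi$. Writing $[x,y]:=\varphi(y)(x)$ for the associated semi-inner product and evaluating the operator identity $U^*\varphi U=\varphi$ first at an arbitrary $y\in V$ and then at an arbitrary $x\in V$, one obtains $[U(x),U(y)]=[x,y]$ for all $x,y$. By the Koehler--Rosenthal theorem this forces $U$ to be an isometry. Moreover the hypothesis itself presupposes that $U^*$ is invertible; together with the injectivity that comes from $U$ being an isometry this yields that $U$ is bijective (automatic in the finite-dimensional Minkowski setting of the paper, and forced in general once $(U^*)^{-1}$ is assumed to exist). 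Hence $U$ is an invertible isometry.

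Conversely, suppose $U$ is an invertible isometry. The easy half of the Koehler--Rosenthal theorem is not enough here; one needs its substantive direction, which produces a semi-inner product $[\cdot,\cdot]$ inducing the norm with $[U(x),U(y)]=[x,y]$ for all $x,y$. Let $\varphi$ be the duality map it induces, $\varphi(y)=[\cdot,y]$. Reading the scalar identity $[U(x),U(y)]=[x,y]$ back as an identity of functionals gives $U^*\varphi U=\varphi$; since $U$ is invertible so is $U^*$, and rearranging gives $\varphi U=(U^*)^{-1}\varphi$, which is exactly the assertion that $U$ is iso-abelian.

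The step that deserves the most care is the bookkeeping around the duality map: one must verify that the semi-inner product supplied by the Koehler--Rosenthal theorem really induces a duality map in Stampfli's sense and that passing between the scalar identity $[U(x),U(y)]=[x,y]$ and the operator identity $U^*\varphi U=\varphi$ is legitimate, i.e.\ that $\varphi\colon V\to V^*$ is linear with point-separating values. In the smooth case this is immediate from Giles' representation; in the possibly non-smooth case it is precisely the content of the extension carried out in \cite{koehler-rosenthal}. Apart from this, the corollary is a formal consequence of the theorem, so I do not expect a genuine obstacle.
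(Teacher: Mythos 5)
Your derivation is correct and follows exactly the route the paper intends: the paper states this corollary without proof, presenting it as a consequence of the preceding Koehler--Rosenthal theorem, and your unwinding of Stampfli's condition $\varphi U=(U^*)^{-1}\varphi$ into $U^*\varphi U=\varphi$, hence $[U(x),U(y)]=[x,y]$ for the semi-inner product $[x,y]=\varphi(y)(x)$, together with the converse use of the substantive half of that theorem, is precisely that argument. One small correction: the duality map $\varphi$ is in general \emph{not} linear, but your manipulations never need it to be (they only use composition of maps and linearity of $U$ and $U^*$), so the ``care point'' you flag is moot; also note that in the forward direction the isometry property already follows from the case $x=y$, i.e.\ from the trivial half of the cited theorem, while invertibility of $U$ follows from the assumed invertibility of $U^*$ as you say.
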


For our characterization the following result is very important:

\begin{theorem}[\cite{koehler-rosenthal}]
A finite dimensional eigenspace of an isometry has a complement invariant under the isometry.
\end{theorem}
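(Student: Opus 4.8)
The plan is to first reduce to the eigenvalue $1$ and then build the invariant complement inside $V$ as the pre-annihilator of a space of $U^*$-fixed functionals. Since $U$ is an isometry, an eigenvalue $\lambda$ of $U$ automatically satisfies $|\lambda|=1$, so $\lambda^{-1}U$ is again an isometry, it has exactly the same invariant subspaces as $U$, and its fixed subspace $\ker(\lambda^{-1}U-I)$ equals the given eigenspace $W=\ker(U-\lambda I)$. Hence we may assume $\lambda=1$, so that $W=\{v\in V:\ Uv=v\}$ with $\dim W=n<\infty$.

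The crucial step is the claim that \emph{every functional $\psi\in W^*$ is the restriction to $W$ of some $f\in V^*$ with $f\circ U=f$.} Extend $\psi$ by the Hahn--Banach theorem to $\widetilde\psi\in V^*$ and put
\[
K=\bigl\{\,f\in V^*:\ \|f\|\le\|\widetilde\psi\|\ \text{and}\ f|_W=\psi\,\bigr\}.
\]
This set is convex, nonempty (it contains $\widetilde\psi$), and weak-$*$ compact by the Banach--Alaoglu theorem; moreover $U^*(K)\subseteq K$, because $\|U^*\|\le\|U\|=1$ and, as $U$ fixes $W$ pointwise, $(U^*f)|_W=f|_W$. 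Since $U^*$ is weak-$*$ continuous and affine, the Markov--Kakutani (equivalently Schauder--Tychonoff) fixed point theorem yields $f\in K$ with $U^*f=f$, which proves the claim. One may instead argue directly: a weak-$*$ cluster point $f$ of the Ces\`{a}ro averages $\frac1k\sum_{j=0}^{k-1}\widetilde\psi\circ U^{j}$ lies in $K$ and is $U^*$-fixed, because $U^*$ commutes with these averages and $\frac1k\bigl((U^*)^{k}\widetilde\psi-\widetilde\psi\bigr)\to 0$ in norm.

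Now choose a basis $\psi_1,\dots,\psi_n$ of $W^*$, extend each to an $f_i\in V^*$ with $U^*f_i=f_i$ as above, and set $M=\lin\{f_1,\dots,f_n\}$. Then $M$ is an $n$-dimensional, $U^*$-invariant subspace of $V^*$ on which the restriction map $f\mapsto f|_W$ is an isomorphism onto $W^*$. Let $Z={}^{\circ}M=\{v\in V:\ f(v)=0\ \text{for all }f\in M\}$. Three things remain, all routine: (i) $Z$ is $U$-invariant, since for $v\in Z$ and $f\in M$ we have $f(Uv)=(U^*f)(v)=f(v)=0$; (ii) $W\cap Z=\{0\}$, since on $W$ the functionals from $M$ restrict to all of $W^*$ and therefore separate points; (iii) the well-defined injection $V/Z\to M^*$, $v+Z\mapsto(f\mapsto f(v))$, gives $\dim(V/Z)\le n=\dim W$, so the composite $W\hookrightarrow V\to V/Z$ is an isomorphism and $V=W\oplus Z$, with $Z$ invariant.

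The only genuine difficulty is the middle step: one cannot average the Hahn--Banach extension inside $V$, since $V$ need not be reflexive, so the point is to pass to the dual space, where Banach--Alaoglu supplies the compactness that makes the averaging (or the fixed-point argument) converge; everything else is linear algebra and elementary duality. When the norm is smooth there is a shorter route, which bypasses the dual: the invariant semi-inner product furnished by the Koehler--Rosenthal theorem lets one take $Z=\{v\in V:\ [v,w]=0\text{ for all }w\in W\}$ directly, the required surjectivity onto $W^*$ then being the generalized Riesz representation theorem applied on the finite-dimensional space $W$.
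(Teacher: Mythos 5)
Your proof is correct, and it takes a genuinely different route from the one the paper points to. The paper does not prove this statement: it quotes it from Koehler--Rosenthal and only remarks that the complement can be taken orthogonal to the eigenspace with respect to a semi-inner product preserved by the isometry; that construction first manufactures an invariant s.i.p.\ (by an averaging/Banach-limit argument applied to $[U^jx,U^jy]$) and then complements the eigenspace inside $V$. You instead work entirely in the dual: after normalizing the eigenvalue to $1$ (legitimate, since $|\lambda|=1$ and $\lambda^{-1}U$ is an isometry with the same invariant subspaces), you extend each functional on $W$ to a $U^*$-fixed functional via Hahn--Banach together with weak-$*$ compactness (Markov--Kakutani, or equivalently the Ces\`aro-average cluster point), and take the pre-annihilator $Z$ of the span $M$ of $n$ such extensions; the estimate $\dim (V/Z)\le \dim M^*\le n$ combined with $W\cap Z=\{0\}$ forces $V=W\oplus Z$, and $U$-invariance of $Z$ is immediate from $U^*f=f$. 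Both arguments ultimately rest on an averaging idea, but yours applies it to functionals rather than to the semi-inner product, requires neither smoothness nor surjectivity of $U$ nor completeness of $V$, and makes the codimension count of the complement transparent. One caveat about your closing aside: in the smooth case the set $Z=\{v\in V:\ [v,w]=0 \mbox{ for all } w\in W\}$ is indeed a $U$-invariant subspace meeting $W$ trivially, but that it actually complements $W$ is not a one-liner, because $w\mapsto[\cdot,w]$ is not linear, so the functionals $[\cdot,w]$, $w\in W$, need not span an $n$-dimensional subspace of $V^*$ and the codimension of their common kernel is not automatically $\dim W$; closing that gap needs the invariance of the product (this is essentially the content of the cited Koehler--Rosenthal construction), not merely the finite-dimensional Riesz representation on $W$.
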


For the construction it can be seen that this complement is orthogonal to the given eigenspace of the isometry with respect to that semi-inner product which is preserved by the isometry. Since every linear mapping has at least one (complex) eigenvalue, hence a complex finite-dimensional Banach space is an orthogonal direct sum of eigenspaces of a given isometry (See Corollary 4 in \cite{koehler-rosenthal}.) For the real case we get analogously the following statement:

\begin{theorem}\label{nfi}
Let $V$ be a finite dimensional real Banach space, $U:V\longrightarrow V$ be an isometry on $V$, and $[\cdot,\cdot]$ be a semi-inner product preserved by $U$. Then there is a decomposition of the space of form
$$
V=V_1\oplus\ldots V_s\oplus V_{s+1}\oplus  \ldots \oplus V_l\oplus V_{l+1}\oplus\ldots \oplus V_{l+k},
$$
where $V_i$ $1\leq i\leq l$ are $U$-invariant mutually orthogonal eigenspaces of dimension $1$, if $1\leq i\leq s$ the corresponding eigenvalue is $1$, and for $s\leq i\leq l$ the common eigenvalue is $-1$; moreover $n-l$ is even and the subspaces $V_{l+1}, \ldots , V_{l+k}$ are $2$-dimensional $U$-invariant subspaces such that all of them are orthogonal to the $1$-dimensional ones. The restriction of $U$ to a $2$-dimensional component is a \emph{generalized rotation} with respect to an Auerbach basis $\{a_s,b_s\}$ defined by the matrix
$$
\left[A|_{\lin\{a_s,b_s\}}\right]_{\{a_s,b_s\}}=\left(\begin{array}{cc}
\cos\varphi & \sin\varphi \\
-\sin\varphi & \cos\varphi
\end{array}
\right) \mbox{ where } 0< \varphi \leq 2\pi
$$
\end{theorem}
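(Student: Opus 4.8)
The plan is to imitate the proof of Theorem~\ref{nfaa}, replacing the elementary fact used there (``the orthogonal complement of an invariant subspace is invariant'', which relied on $A$ being adjoint abelian) by the theorem of Koehler and Rosenthal quoted above, together with one input special to isometries: the powers $U^{m}$, $m\in\mathbb Z$, are all isometries and hence form a bounded family of operators. From $\|Uv\|=\|v\|$ one first gets that the only real eigenvalues of $U$ are $1$ and $-1$, and that every complex eigenvalue $\lambda$ of the complexification $\widetilde U$ has modulus $1$: otherwise $\widetilde U^{m}v=\lambda^{m}v$ would make $\{\widetilde U^{m}\}$ (hence $\{U^{m}\}$) unbounded as $m\to+\infty$ or $m\to-\infty$. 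In particular the part of $V$ carrying no real eigenvalue has characteristic polynomial a product of irreducible real quadratics, so its dimension $n-l$ is even.

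I would then build the decomposition by splitting off one orthogonal invariant summand at a time. If the current $U$-invariant subspace $W$ contains a unit eigenvector $e$ with eigenvalue $\pm1$, then the hyperplane $\{w\in W:[w,e]=0\}$ is again $U$-invariant (a one-line computation from $[Uw,Ue]=[w,e]$, treating the case of eigenvalue $-1$ by passing to the isometry $-U$), so $W=\langle e\rangle\oplus(\langle e\rangle^{\bot}\cap W)$ is an orthogonal $U$-invariant splitting; iterating exhausts $E_{1}$ and $E_{-1}$ and yields the one-dimensional summands $V_{1},\dots,V_{l}$. Once no real eigenvalue remains, I complexify and apply the Koehler--Rosenthal theorem to a complex eigenspace $E_{\lambda}$ of $\widetilde U$: it has a $\widetilde U$-invariant orthogonal complement, and a second application to $E_{\bar\lambda}$ peels off, orthogonally to the rest, the real $U$-invariant subspace attached to the conjugate pair $\{\lambda,\bar\lambda\}$. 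Inside that subspace one splits further into $2$-dimensional $U$-invariant planes exactly as in the proof of Theorem~\ref{nfaa}, choosing the complex eigenvectors that span $E_{\lambda}$ so that the associated real planes are mutually orthogonal. Exhausting $V$ produces $V_{l+1},\dots,V_{l+k}$.

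It remains to identify $U$ on a two-dimensional component $Z=\lin\{a_{s},b_{s}\}$. Arguing as in the proof of Theorem~\ref{nfaa}, from a complex eigenvector $u=a+bi$ of $\widetilde U|_{\widetilde Z}$ with eigenvalue $\lambda=|\lambda|e^{i\varphi}$ one gets $Ua=|\lambda|(\cos\varphi\,a-\sin\varphi\,b)$ and $Ub=|\lambda|(\sin\varphi\,a+\cos\varphi\,b)$, i.e. $U|_{Z}=|\lambda|F_{\varphi}$ in the basis $\{a,b\}$, and the scaling freedom $u\mapsto cu$, $c\in\mathbb C^{*}$, can be used (again as there) to make $\{a,b\}$ an Auerbach basis with $[a,a]=[b,b]=1$ and $[a,b]=[b,a]=0$. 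Finally $|\lambda|=1$: the isometry $U|_{Z}$ maps the unit disc of $Z$ onto itself, hence preserves Lebesgue measure, so $|\lambda|^{2}=|\det(U|_{Z})|=1$. Thus $U|_{Z}=F_{\varphi}$, which is precisely the asserted matrix, and we may take $0<\varphi\le2\pi$.

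The main obstacle is that Theorem~\ref{nfi} is stated for an arbitrary finite-dimensional real Banach space, whereas Theorem~\ref{nfaa} (and the ``orthogonal complement is invariant'' steps) rest on smoothness: in a smooth space the induced semi-inner product is unique and additive in its second variable, which is what these arguments use. Without smoothness the preserved product $[\cdot,\cdot]$ is merely one of many and is no longer bilinear, so the relevant steps must be revisited. Here I would argue that a $2$-dimensional $U$-invariant plane on which $U$ acts with infinite order carries a circle group of isometries in the closure of $\langle U|_{Z}\rangle$, forcing its unit disc to be an ellipse (so $Z$ is Euclidean and the above applies), while if $U|_{Z}$ has finite order one averages an inner product over $\langle U|_{Z}\rangle$ and works with that Euclidean structure. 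Pinning down the non-smooth case rigorously---and upgrading the one-sided, nested orthogonality produced by the recursion to the \emph{mutual} orthogonality claimed in the statement---is where the real work lies.
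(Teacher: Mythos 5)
Your outline follows essentially the same route as the paper: the paper also takes the Koehler--Rosenthal results (the preserved semi-inner product and the theorem that a finite-dimensional eigenspace of an isometry has an invariant, orthogonal complement, together with its corollary that the space is an orthogonal direct sum of eigenspaces) as the substitute for the ``orthogonal complement of an invariant subspace is invariant'' step, then sorts the eigenvalues into $\pm 1$ and conjugate complex pairs of modulus one, passes from a complex eigenvector $u=a+bi$ to the real invariant plane $\lin\{a,b\}$ exactly as in the proof of Theorem~\ref{nfaa}, and concludes that the restriction to such a plane is $|\lambda|F_\varphi$ with $|\lambda|=1$. Your iterative ``peel off one orthogonal summand at a time'' construction, the power-boundedness argument for $|\lambda|=1$, and the determinant argument are harmless variants of what the paper does more directly by quoting Corollary~4 of \cite{koehler-rosenthal}.

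The place where you diverge is your final ``main obstacle'' paragraph, and there the worry is largely misplaced rather than a real gap in this theorem. Smoothness is never used in the paper's proof of Theorem~\ref{nfi}: the semi-inner product here is not the (possibly non-unique) induced product of a smooth norm but the one supplied by Koehler--Rosenthal as preserved by $U$, and every semi-inner product is additive and homogeneous in its \emph{first} variable by axioms s1--s2, which is all that the orthogonality computations (e.g.\ $[a+bi,u_r]=0\Rightarrow[a,u_r]=[b,u_r]=0$, or $[e,f]=[Ue,Uf]=\lambda\mu[e,f]$ forcing two-sided orthogonality of eigenvectors to distinct real eigenvalues) require; the identification of $U$ on a two-dimensional block is pure linear algebra with no product involved; and no analogue of the Lipschitz/bilinearity hypotheses of Theorem~\ref{nfaa} is invoked. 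Consequently the speculative machinery in your last paragraph (closure of $\langle U|_Z\rangle$ being a circle group forcing an ellipse, averaging an inner product) is unnecessary, and leaving it as ``where the real work lies'' leaves your argument formally unfinished where the paper simply cites \cite{koehler-rosenthal} and chooses Auerbach bases inside the $(+1)$- and $(-1)$-eigenspaces to get the stated mutual orthogonality. What is fair in your caveat is that the paper itself is brisk about the complexification of the preserved product and about upgrading the orthogonality coming from the recursive construction to full mutual orthogonality; but if you flag that, you should resolve it along the lines above rather than reroute through Euclidean structures on the two-dimensional blocks.
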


\begin{proof} Since $V$ is an orthogonal direct sum of the eigenspaces of $U$ we have $n$ mutually orthogonal eigenvectors of $U$, say $u_1,\ldots, u_n$. Since $X$ is a finite dimensional real Banach space the eigenvalues $\lambda_1,\ldots ,\lambda_l$ corresponding to $u_1,\ldots, u_l$ are real numbers and the remaining eigenvalues $\lambda_{l+1},\ldots, \lambda_{n}$ are complex ones.

First examine the eigenvalues $\lambda_1,\ldots ,\lambda_l$. Since $U$ is an isometry we have only two possibilities for their values, these are $1$ and $-1$. We can assume that $\lambda_1=\cdots =\lambda_s=1$ and  $\lambda_{s+1}=\cdots =\lambda_l=-1$. In the subspace generated by the first $s$ eigenvectors every vector is an eigenvector with eigenvalue $1$ thus we can choose $u_1,\ldots ,u_s$ as the elements of an Auerbach basis (hence they are mutually orthogonal vectors). We choose the basis $\{u_{s+1},\ldots,u_{l}\}$ analogously from the eigenspace of eigenvalue $-1$. Since two eigenvectors corresponding to distinct eigenvalues are mutually orthogonal to each other, we get the orthogonality property of the statement about the first $l$ eigenspaces.

Assume now that $\lambda_{l+(2r-1)}=\overline{\lambda}_{l+2r}$ holds for $r=1,\ldots, (n-l)/2$. Consider again the vectors $u_{l+(2r-1)}=a_{l+(2r-1)}+b_{l+(2r-1)}i$ and scalars $\lambda_{l+(2r-1)}=\alpha_{l+(2r-1)}+\beta_{l+(2r-1)}i$ such that $U(u_{l+(2r-1)})=\lambda_{l+(2r-1)}u_{l+(2r-1)}$. (See the analogous construction in the proof of Theorem 8 on adjoint abelian operators.) The real subspaces $\lin\{a_{l+(2r-1)},b_{l+(2r-1)}\}$ are invariant with respect to $U$ and have dimension 2. Since $\lambda_{l+2r}=\alpha_{l+(2r-1)}-\beta_{l+(2r-1)}i$ and $u_{l+2r}=a_{l+2r}+b_{l+2r}i=a_{l+(2r-1)}-b_{l+(2r-1)}i$ we also have that $\lin\{a_{l+2r},b_{l+2r}\}=\lin\{a_{l+(2r-1)},b_{l+(2r-1)}\}$. Hence $V_{l+(2r-1)}=V_{l+2r}=\lin\{a_{l+(2r-1)},b_{l+(2r-1)}\}$ is an eigenspace of dimension at most $2$. The case when $b_{l+(2r-1)}=\alpha a_{l+(2r-1)}$ with real $\alpha$ implies that $a_{l+(2r-1)} $ is a real eigenvector with complex eigenvalue $\lambda_i$, which is impossible thus we get the decomposition of the statement.
Since the equality $[a_{l+(2r-1)}+b_{l+(2r-1)}i,u_r]=0$ implies the respective equalities $[a_{l+(2r-1)},u_r]=0$ and $[b_{l+(2r-1)},u_r]=0$, the last statement on orthogonality is also true.
Finally from the $U$-invariant property it follows that $U$ restricted to a $2$-dimensional invariant subspace is a generalized dilatation (see Theorem 8). On the other hand $U$ is an isometry thus $|\lambda_{l+(2r-1)}|=1$ for all $r$ hence it is a general rotation as we stated.
\end{proof}

\begin{remark} We note that there are non-diagonalizable general rotations which are also isometries. In an $l_p$ space of dimension $2$ for the general rotation $F_{\pi/2}$ we get $F_{\pi/2}(x_1a_s+y_1b_s)=(y_1a_s-x_1b_s)$ and $F_{\pi/2}(x_2a_s+y_2b_s)=(y_2a_s-x_2b_s)$ showing that
$$
[F_{\pi/2}z,F_{\pi/2}v]=
$$
$$
=\frac{1}{\left(|y_2|^p+|-x_2|^p\right)^{\frac{p-2}{p}}}\left(y_1|y_2|^{p-1}\mathrm{sgn }(y_2)-x_1|-x_2|^{p-1}\mathrm{sgn }(-x_2)\right)=
$$
$$
=\frac{1}{\left(|x_2|^p+|y_2|^p\right)^{\frac{p-2}{p}}}\left(y_1|y_2|^{p-1}\mathrm{sgn }(y_2)+x_1|x_2|^{p-1}\mathrm{sgn }(x_2)\right)=[z,v].
$$
\end{remark}

\section{Geometric algebra of a Minkowski geometry}

\subsection{The group of isometries}

In geometric algebra, one studies the properties of certain algebraic entities that can be directly linked with geometric objects, and analyses how their (algebraic) properties relate to geometric properties of the underlying geometry under investigation. This approach will be applied here to the study of "strictly convex" (normed or) Minkowski spaces, that is, metric spaces whose unit balls are centrally symmetric and strictly convex bodies. Such planes have been studied for many years; see \cite{busemann} \cite{martini-swanepoel 1} \cite{martini-swanepoel 2} \cite{thompson}, and it is particulary interesting to characterize their groups of isometries or related transformation groups.  Although the lines of strictly convex non-Euclidean Minkowski planes are just their affine lines, the group of their isometries is small. Namely, it is the semi-direct product of the translation group by a finite group of even order which either consists of Euclidean rotations or is the dihedral group. This nice fact was proven
by several authors (see in \cite{garcia},\cite{thompson} and \cite{martini-spirova-strambach}).

\begin{theorem}[\cite{garcia},\cite{thompson},\cite{martini-spirova-strambach}]\label{igthmplane} If $(V, \|\cdot\|)$ is a Minkowski plane that is non-Euclidean, then the group $\mathcal{I}(2)$ of isometries of $(V, \|\cdot\|)$ is isomorphic to the semi-direct product of the translation group $\mathcal{T}(2)$ of $\mathbb{R}^2$ with a finite group of even order that is either a cyclic group of rotations or a dihedral group.
\end{theorem}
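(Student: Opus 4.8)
The plan is to reduce the statement to a description of the group of \emph{linear} isometries fixing the origin. First I would invoke the classical Mazur--Ulam theorem: every surjective isometry between normed spaces is affine, so each element of $\mathcal I(2)$ has the form $x\mapsto U(x)+v$ with $v\in\mathbb R^2$ and $U$ a linear isometry of $(V,\|\cdot\|)$. The translations form a normal subgroup $\mathcal T(2)$, and the stabiliser $\mathcal O$ of the origin --- the group of linear isometries --- is a complement, so that $\mathcal I(2)=\mathcal T(2)\rtimes\mathcal O$ as an internal semidirect product; thus the whole theorem reduces to identifying $\mathcal O$. Next I would note that $\mathcal O$ is a \emph{compact} subgroup of $GL(2,\mathbb R)$: it is bounded, since every $U\in\mathcal O$ and its inverse have operator norm $1$, and closed, since a pointwise limit of norm-preserving linear maps is norm-preserving (and stays invertible, isometries being volume-preserving). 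By the usual averaging argument a compact subgroup of $GL(2,\mathbb R)$ is conjugate into the orthogonal group $O(2)$; fix $T\in GL(2,\mathbb R)$ with $T\mathcal O T^{-1}\subseteq O(2)$.

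The decisive step --- and the one I expect to be the main obstacle --- is to prove that $\mathcal O$ is \emph{finite}; this is precisely where the non-Euclidean hypothesis enters. Suppose $\mathcal O$ were infinite. Then $T\mathcal O T^{-1}$ is an infinite closed subgroup of $O(2)$, and since $(T\mathcal O T^{-1})\cap SO(2)$ has index at most $2$ in it, that intersection is an infinite closed subgroup of $SO(2)\cong\mathbb R/\mathbb Z$, hence all of $SO(2)$; so $SO(2)\subseteq T\mathcal O T^{-1}$. Consequently $\mathcal O$ contains the one-parameter group $T^{-1}SO(2)\,T$ of linear automorphisms of $V$, each preserving the unit sphere $S$. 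The orbits of this group are the $T^{-1}$-images of Euclidean circles centred at the origin, i.e.\ a nested family of homothetic ellipses; as $S$ is a connected convex curve invariant under the group, it must coincide with a single such orbit, so $S$ is an ellipse and $(V,\|\cdot\|)$ is Euclidean --- a contradiction. One could equivalently argue through Theorem \ref{nfi}: an infinite $\mathcal O$ would contain infinitely many generalized rotations $F_\varphi$, and taking integer powers of one whose angle is an irrational multiple of $2\pi$, or passing to the closure of the group they generate, again forces $S$ to be a Euclidean circle in the associated Auerbach coordinates.

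With $\mathcal O$ finite, the classification of finite subgroups of $O(2)$ gives that $\mathcal O$ is either a cyclic group generated by a rotation or a dihedral group. To see that its order is even I would use that the central symmetry $x\mapsto -x$ is always an isometry fixing the origin, so $-I\in\mathcal O$ and has order $2$; hence $2\mid|\mathcal O|$. In the cyclic case this says the order is even directly; in the dihedral case the order $2m$ is automatically even, and since $-I$ is not a reflection it must equal $\rho^{m/2}$ for the generating rotation $\rho$, forcing $m$ even. Combining, $\mathcal I(2)\cong\mathcal T(2)\rtimes\mathcal O$ with $\mathcal O$ a finite group of even order that is cyclic (rotations) or dihedral, as claimed.

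In summary: the affine reduction and the $O(2)$-structure theory are routine, and the real content is excluding a continuous family of linear isometries. That exclusion rests either on the Lie-theoretic fact that the only positive-dimensional compact connected subgroups of $GL(2,\mathbb R)$ are conjugates of $SO(2)$, or on the explicit normal form of Theorem \ref{nfi} together with an angle-density argument; everything after that point is bookkeeping.
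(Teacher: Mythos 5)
Your argument is essentially correct, but note that the paper itself offers no proof of this statement: Theorem \ref{igthmplane} is quoted as a known result from \cite{garcia}, \cite{thompson} and \cite{martini-spirova-strambach}, so there is no in-paper proof to match against. The closest internal comparison is the paper's proof of the three-dimensional analogue, Theorem \ref{igtne}, and the two arguments share the same skeleton: split $\mathcal{I}$ as $\mathcal{T}\rtimes\mathcal{I}_0$, normalize the stabilizer by an ellipsoid (the paper uses the John ellipsoid of the unit ball, which is the same device as your averaging/conjugation of the compact group $\mathcal{O}$ into $O(2)$ -- indeed the John ellipsoid gives a Haar-free way to do your conjugation step), and then show that an infinite stabilizer forces an ellipse inside the unit sphere, contradicting non-Euclideanity. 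Where you differ is in how the finiteness contradiction is extracted: the paper's 3D proof works by an accumulation argument on the orbit of a contact point of $\mathrm{bd}\,C\cap\mathrm{bd}\,E$, producing small rotations and a dense subset of a circle lying on $\mathrm{bd}\,C$, whereas in the plane you can use the cleaner structural fact that an infinite closed subgroup of $O(2)$ contains all of $SO(2)$, whence the unit circle is a full orbit and hence an ellipse; this shortcut is exactly what is available in dimension $2$ and is why the planar statement is sharper (cyclic or dihedral of even order, evenness coming from $-I\in\mathcal{O}$ as you say). Two small points worth tightening if you write this up: justify that the orbit ellipse contained in $S$ equals $S$ by observing that both curves meet each ray from the origin exactly once (you gesture at this but do not state it), and be aware that your alternative sketch via Theorem \ref{nfi} with an ``irrational angle'' element needs the closure argument anyway, since a priori an infinite group could consist of finite-order elements; your main argument already handles this correctly through closedness of $\mathcal{O}$.
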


In higher dimension it is possible for the group of linear isometries to be infinite without the space being Euclidean (e.g. if the unit ball is a elliptic cylinder in $\mathbb{R}^3$). The proof can be found in \cite{thompson} uses the concept of L\"owner-John's ellipsoids. John's (L\"owner's) ellipsoid of the unit ball $C$ is the unique ellipsoid with maximal (minimal) volume contained (circumscribed) in (about) it. It is clear that every isometry which leaves the unit ball invariant, also sends these ellipsoids into themselves, respectively. A nice consequence of this fact (proved first by Auerbach in \cite{auerbach}) is the following:

\begin{corollary}[\cite{thompson},\cite{auerbach}]
If the isometry group of a Minkowski space is transitive on the unit ball of the space then the unit ball is an ellipsoid and the space is Euclidean.
\end{corollary}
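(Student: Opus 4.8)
The plan is to exploit the uniqueness, hence the $G$-invariance, of the L\"owner--John ellipsoids. Write $C$ for the unit ball, a centrally symmetric convex body with centre at the origin, and let $G$ be the group of linear isometries of the space (equivalently, the isometries leaving $C$ invariant); by hypothesis $G$ acts transitively on the unit sphere $S=\partial C$. Let $E\subseteq C$ be John's ellipsoid of $C$, i.e. the unique inscribed ellipsoid of maximal volume; by central symmetry and uniqueness $E$ is centred at the origin. As observed in the paragraph preceding the corollary, each $g\in G$ maps $C$ onto itself, and since a linear map with $g(C)=C$ satisfies $|\det g|=1$, the image $g(E)$ is again an inscribed ellipsoid of maximal volume, so $g(E)=E$ by uniqueness. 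Thus every element of $G$ leaves both $C$ and $E$ invariant.

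Next I would produce a contact point. Because $E$ has maximal volume among inscribed ellipsoids, its boundary must meet $\partial C$: otherwise $E$ lies in the interior of $C$ and the dilate $(1+\varepsilon)E$ is still contained in $C$ for small $\varepsilon>0$, contradicting maximality. Fix $p\in\partial E\cap\partial C$. Now let $q\in S$ be arbitrary. By transitivity there is $g\in G$ with $g(p)=q$; since $g$ is a linear automorphism carrying $E$ onto $E$, it carries $\partial E$ onto $\partial E$, so $q=g(p)\in\partial E$. As $q$ was arbitrary, $\partial C=S\subseteq\partial E$.

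It remains to close the loop. Given $x\in C$ with $x\neq 0$, let $x_0$ be the point in which the ray from the origin through $x$ meets $\partial C$. By the previous step $x_0\in\partial E\subseteq E$; since $E$ is convex and contains the origin, the segment $[0,x_0]$ lies in $E$, and $x\in[0,x_0]$ because $\|x\|\leq 1=\|x_0\|$. Hence $x\in E$, so $C\subseteq E$, and combined with $E\subseteq C$ this gives $C=E$. Therefore the unit ball is an ellipsoid; after a linear change of coordinates taking $E$ to the Euclidean unit sphere, the norm is induced by an inner product, i.e. the space is Euclidean. (Löwner's circumscribed ellipsoid may be used symmetrically.) The argument is short, so there is no real obstacle; the only points requiring care are the invariance step — one must note that a linear isometry of a Minkowski space automatically preserves Lebesgue volume, which is what makes ``maximal volume among inscribed ellipsoids'' a $G$-invariant description of $E$ — and the remark that a single contact point already suffices, so one does not need the full list of contact points supplied by John's theorem.
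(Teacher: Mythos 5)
Your proof is correct and follows exactly the route the paper indicates: the uniqueness of John's (L\"owner's) ellipsoid makes it invariant under every linear isometry, a contact point exists by maximality, and transitivity on the unit sphere then forces $\partial C\subseteq\partial E$, hence $C=E$. The paper states the corollary as a direct consequence of this invariance (citing Auerbach and Thompson) without writing out the details, so your argument is the intended one, completely filled in.
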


On the other hand, Gruber in \cite{gruber} shows that for "most" cases the group of isometries is finite. This follows from the fact that in "most" cases a Minkowski unit ball meets the boundary of the L\"owner ellipsoid in $d(d+1)/2$ pairs of symmetric points (see in \cite{gruber}.) Using again the concept of John's ellipsoid we can prove a similar results which is also a generalization of Theorem \ref{igthmplane}.

\begin{theorem}\label{igtne}
If the unit ball $C$ of $(V, \|\cdot\|)$ does not intersect a two-plane in an ellipse, then the group $\mathcal{I}(3)$ of isometries of $(V, \|\cdot\|)$ is isomorphic to the semi-direct product of the translation group $\mathcal{T}(3)$ of $\mathbb{R}^3$ with a finite subgroup of the group of linear transformations with determinant $\pm 1$.
\end{theorem}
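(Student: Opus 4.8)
The plan is to reduce the problem to the group of linear isometries and then prove finiteness by exploiting John's ellipsoid together with the structure of compact subgroups of $O(3)$. First I would invoke the Mazur--Ulam theorem: every bijective isometry of a normed space is affine, so each element of $\mathcal{I}(3)$ factors as a translation followed by a linear isometry. Writing $\mathcal{L}$ for the group of linear isometries of $(V,\|\cdot\|)$ (those fixing the origin), this gives $\mathcal{I}(3)\cong\mathcal{T}(3)\rtimes\mathcal{L}$, so it remains to show that every element of $\mathcal{L}$ has determinant $\pm 1$ and that $\mathcal{L}$ is finite.

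For the determinant condition I would argue as follows. Let $E$ be the John ellipsoid of $C$. Every $U\in\mathcal{L}$ leaves $C$ invariant, hence leaves $E$ invariant as well (as noted just before the statement). Thus $\mathcal{L}$ is a subgroup of the orthogonal group $O(E)$ of the Euclidean structure determined by $E$; identifying $O(E)$ with $O(3)$, each $U\in\mathcal{L}$ is orthogonal for this inner product, so $\det U=\pm 1$. At the same time $\mathcal{L}$ is the stabilizer of the convex body $C$ inside the compact group $O(3)$, hence a closed (therefore compact) subgroup of $O(3)$.

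The remaining, and main, point is to deduce that $\mathcal{L}$ is finite, and this is where the hypothesis on $C$ is used. Suppose $\mathcal{L}$ were infinite. Being a closed infinite subgroup of $O(3)$, it is a compact Lie group of positive dimension, so its identity component is a nontrivial closed connected subgroup of $SO(3)$ and thus contains a nonconstant one-parameter subgroup. The closure of that one-parameter subgroup is a torus of $SO(3)$, and since the maximal torus of $SO(3)$ is one-dimensional this closure is the whole group $SO(2)_{\ell}$ of rotations (with respect to the inner product of $E$) about some line $\ell$ through the origin. Consequently $C$ is invariant under every such rotation. Taking $P$ to be the two-dimensional subspace $E$-orthogonal to $\ell$, the section $C\cap P$ is a planar convex body with $0$ in its interior that is invariant under every rotation of $P$ about $0$; such a body is a Euclidean disk, so $C$ meets the two-plane $P$ in an ellipse, contradicting the hypothesis. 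Hence $\mathcal{L}$ is finite, and the asserted structure of $\mathcal{I}(3)$ follows.

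I expect the step from ``$\mathcal{L}$ infinite'' to ``$\mathcal{L}\supseteq SO(2)_{\ell}$'' to be the crux: it rests on Cartan's closed-subgroup theorem and on the classification of the tori of $SO(3)$. The remaining ingredients (the Mazur--Ulam reduction, the invariance of the John ellipsoid under norm isometries, and the elementary fact that a rotationally symmetric planar convex body is an ellipse) are routine. One could alternatively obtain $\det U=\pm 1$ directly from the normal form in Theorem~\ref{nfi}, namely as a product of reflections of determinant $\pm 1$ and generalized rotations of determinant $1$, but the John-ellipsoid argument yields it with no extra work.
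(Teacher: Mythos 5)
Your proposal is correct, and its overall skeleton (reduction to the linear stabilizer of the origin, normalization by the John ellipsoid so that the linear isometries sit inside $O(3)$, determinant $\pm 1$ for free, and a contradiction with the ``no elliptical section'' hypothesis) coincides with the paper's. The difference lies in how the key finiteness step is carried out. The paper argues by hand: if the stabilizer were infinite, the orbit of a common boundary point $x$ of $C$ and of the John ellipsoid $E$ accumulates, so for every $k$ one finds an isometry in $SO(3)$ moving $x$ by at most $1/k$; iterating it puts a $1/k$-net of points of $\mathrm{bd}\,C$ on a circle of $\mathrm{bd}\,E$ through $x$, and a limiting argument on the normal vectors of these circles produces a whole circle contained in $\mathrm{bd}\,E\cap\mathrm{bd}\,C$, whose plane then cuts $C$ in an ellipse. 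You instead invoke Cartan's closed-subgroup theorem and the rank-one structure of $SO(3)$ to conclude that an infinite closed subgroup contains a full rotation subgroup $SO(2)_\ell$, and then observe that the central section of $C$ orthogonal to $\ell$ is rotation-invariant, hence a disk, i.e.\ an ellipse. Your route is shorter and conceptually cleaner, and it yields the stronger intermediate fact that an infinite isometry group forces a full circle of rotations; the cost is the imported Lie-theoretic machinery. The paper's route is more elementary and self-contained, using only compactness of the sphere and a pigeonhole argument, at the price of a more delicate limiting construction (and its contradiction comes from a possibly non-central section, whereas yours is central). Minor points worth making explicit in your write-up: the elements of $\mathcal{I}(3)$ are affine by Mazur--Ulam (the paper gets this from $\mathcal{I}(3)$ being a closed subgroup of the affine group), the John ellipsoid of the centrally symmetric body $C$ is centered at the origin so that $O(E)$ is a linear orthogonal group, and the rotations about $\ell$ restrict to rotations of the orthogonal plane $P$, so the invariant section $C\cap P$ indeed has constant radial function and is a disk in the normalized coordinates, hence an ellipse in the original ones.
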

\begin{proof}
Since at any point of $V$ there exists a point reflection that is an isometry of $(V, \|\cdot\|)$, the group $\mathcal{I}(n)$ contains the semi-direct product of $\mathcal{T}(n)$ with a point reflection. Since $\mathcal{I}(n)$ is a closed subgroup of the Lie group of the affinities, the translation group $\mathcal{T}(n)$ is a normal subgroup of $\mathcal{I}(n)$ and $\mathcal{I}(n)$ is a semi-direct product of $\mathcal{T}(n)$ with the stabilizer $\mathcal{I}(n)_0$ of the point $0$ in $\mathcal{I}(n)$ leaving the unit ball $C$ invariant. On the other hand every isometry of $V$ is also an affine isometry thus the elements of $\mathcal{I}(n)_0$ are in the special linear group of order $n$, too (see \cite{garcia}).

For $n=3$, from Theorem \ref{nfi} we get that an isometry has at least one eigenvector and we have two possibilities, either it is a diagonalizable operator or it is not. In the second case it has a minimal invariant subspace of dimension 2. Let $\mathcal{I}_x$ be the subgroup of  $\mathcal{I}(3)_0$ containing those isometries which fix the $1$-dimensional subspace of $x$. Then the $2$-dimensional subspace orthogonal to $x$ is also invariant with respect to the elements of $\mathcal{I}_x$ (see Theorem \ref{nfi}). By Theorem \ref{igthmplane} the group $\mathcal{I}_x$ is a finite group of even order, that is a cyclic group or a dihedral group.

Consider now the John's ellipsoid $E$ (\cite{thompson}) of the unit sphere $C$. The concept of John's ellipsoid is affine invariant hence without loss of generality we can assume that $E$ is a ball inscribed into the suitable affine copy of $C$ (which for simplicity we also denote by $C$). (Now the investigated isometries are elements of $O(3)$.) Consider the group $G$ of elements of $\mathcal{I}(3)_0$ belonging to $SO(3)$. Taking into consideration that the "determinant" map $\det: \mathcal{I}(3)_0\rightarrow \{\pm 1\}$ is a surjective group homomorphism whose kernel $G$ has index $2$ in $\mathcal{I}(3)_0$, so that, $G$ is finite if and only if $\mathcal{I}(3)_0$ is so. Let a point $x$ be a common point of the boundary of $C$ and the boundary of $E$. (Of course such a point exists.) Let denote by $C^+$ the closed half sphere containing $x$ and bounded by the hyperplane orthogonal to $x$ through the origin.  If the group $G$ is infinite then the orbit of $x$ also contains infinitely many distinct points of form $T_i(x)\in \mathrm{ bd }E\cap \mathrm{ bd }C^+$ where $T_i\in \mathcal{I}(3)_0$. Since $\mathrm{ bd }E\cap \mathrm{ bd }C^+$ is compact for every $k\in \mathbb{N}$ there  are two indices $i\neq j$ such that $\|T_i(x)-T_j(x)\|\leq 1/k$ implying that $\|T_j^{-1}T_i(x)-x\|\leq 1/k$. Consider the isometry $T_j^{-1}T_i\in SO(3)$. Hence $T_j^{-1}T_i$ is rotation about an axis, say $x_k$.
Thus the points $\left(T_j^{-1}T_i\right)^l(x)$ for $l\in \mathbb{N}$ are on a two dimensional intersection of $\mathrm{ bd }E$, so they are also on a circle $E_k$. This circle through the point $x$ contains a set of points of $\mathrm{ bd }C$ with successive distance at most $1/k$ forming an $1/k$-net on it. Let denote by $y_k$ the unit normal vector of the plane of $E_k$ directed by $C^{+}$. The set $Y:=\{y_k \quad k\in \mathbb{N}\}$ is infinite and hence it has a convergent subsequence $\left(y_{k_i}\right)$ with limit $y$. Consider now the circle $E(x,y)$ defined by the intersection of $E$ with the plane through $x$ and orthogonal to $y$. It has the property that if $z\in E(x,y)$ then for every $\varepsilon>0$ there is a point $u$ of $\mathrm{ bd }E\cap \mathrm{ bd }C$ such that $\|z-u\|\leq \varepsilon$. This implies that $E(x,y)\subset \mathrm{ bd }E\cap \mathrm{ bd }C$ giving a contradiction with our assumption. Thus the group $\mathcal{I}(n)_0$ is finite and the statement is true.
\end{proof}
\begin{remark}
We note that we proved the finiteness of the point group with a stronger assumption than that of the "totally non-Euclidean" property. A method using L\"owner-John ellipsoids can not be applied to prove a more general statement in this direction because there are Minkowski spaces which are not totally non-Euclidean but the intersection of the John's ellipsoid of its unit sphere contains an ellipse. For a simple example, consider a unit ball $B$ and one of its great circles $S$. Let $H(2n,\varepsilon)$ be a regular polygon circumscribed to $(1+\varepsilon)S$ with $2n$ vertices. Now define the unit ball $C(n,\varepsilon):=\mathrm{ conv }\{B\cup H(2n,\varepsilon)\}$. It is clear that the Minkowski space with unit ball $C(n,\varepsilon)$ is not totally non-Euclidean however for small $\varepsilon$ and for large $n$ the John's ellipsoid of $C(n,\varepsilon)$ is $B$, hence $\mathrm{ bd }C(n,\varepsilon)\cap \mathrm{ bd }B$ contains a circle.
\end{remark}
This motivates the following problem:
\begin{problem}
Is it true or not that if for $n\geq 3$ the Minkowski $n$-space is totally non-Euclidean one (see Definition 4) then its isometry group $\mathcal{I}(n)$ is a semi-direct product of the translation group $\mathcal{T}(n)$ with a finite subgroup of $SL(n)$?
\end{problem}

\subsection{Affine reflections, left reflections}

On an affine plane an axial affinity of order two called by \emph{affine reflection}.
Any affine reflection $\alpha$ has an \emph{axis} $G$ and leaves any line of precisely one parallel class $\mathcal{A}$ not containing $G$ invariant. We call this parallel class \emph{the direction} of $\alpha$. We say that a line has a direction $\mathcal{A}$ if it belongs to $\mathcal{A}$.
We collect the known statements on affine reflections in a strictly convex Minkowski plane. This results can be found in \cite{martini-spirova-strambach}.

\begin{itemize}
\item Let $\alpha$ be the reflection at the point $0$ with the stabilizer $\mathcal{I}(2)_0$ of the isometry group $\mathcal{I}(2)$ of a strictly convex Minkowski plane. If $\mathcal{I}(2)_0$ is the dihedral group, then the lines $G_{\tau}$ and $G_{\tau\alpha}$, which are axes of the affine reflections $\tau$ resp. $\tau\alpha$, are mutually orthogonal.

\item Let $\Psi_{G_1}$ and $\Psi_{G_2}$ be two affine reflections in the non-parallel lines $G_1$ and $G_2$.

(a) The affine reflections $\Psi_{G_1}$ and $\Psi_{G_2}$ leave a line $H$ different to $G_1$ and $G_2$ and passing through $G_1\cap G_2$ invariant if and only if their product is a shear with $H$ as axis.

(b) The product $\Psi_{G_1}\circ \Psi_{G_2}$ has only the intersection $p$ of $G_1$ and $G_2$ as a fixed point if and only if $\Psi_{G_1}\circ \Psi_{G_2}$ is not a shear.
\end{itemize}

The scarcity of isometries of non-Euclidean Minkowski planes motivated H. Martini and M. Spirova to introduce left reflections as a conceptual tool for investigating strictly convex Minkowski planes (see \cite{martini-spirova}).
\begin{defi} Given a line $G$ in a strictly convex Minkowski plane $(V,\|\cdot \|)$, we define a transformation
$$
\Psi_G: V\rightarrow V \quad \Psi_G(p)=p'
$$
to be a left reflection about the line $G$ if

 (i) $p'= p$ holds for all $p\in G$,

 (ii) $p'\neq p$ and $\langle p,p'\rangle\bot_B G$ hold for all points $p\in V\setminus G$, and

 (iii) the midpoint of the segment $[p, p']$ lies on $G$ for all points $p\in V$.
\end{defi}

In general, left reflections are not isometries, but they are strongly related to the notion of Birkhoff orthogonality: if every left reflection in a strictly convex Minkowski plane preserves Birkhoff orthogonality, then this plane is a Radon plane. The advantage of left reflections is that there exists one for any line. In general, they generate the special or equi-affine group of the real affine plane, i.e., the group of affine transformations of determinant 1 (see \cite{veblen}). By imposing specific conditions on left reflections, their products, or the group generated by them, one can characterize the Euclidean plane as well as Radon planes in new ways. For example, a strictly convex Minkowski plane is Euclidean if every left reflection maps circles into circles or preserves James orthogonality, or if the product of any two distinct left reflections is an isometry. In particular, Bachmann's approach to geometry (see \cite{bachmann}) can be used in an efficient way. It were shown that smooth, strictly convex Minkowski or strictly convex Radon planes are Euclidean if the Three-Reflections Theorem holds for left reflections. The left reflections in any strictly convex Minkowski plane are affine reflections. In the Euclidean case the left reflections generate already the full equi-affine group. It can be proved that the set of all left reflections of the Minkowski plane generates a proper subgroup of the equi-affine group then the plane is Euclidean and the subgroup is the full equi-affine group. Using products of two left reflections, one can characterize the singularities of the unit circle in a strictly convex Minkowski plane. Moreover, it is clarified when the product of a left reflection in a line $G$ with the reflection in a point incident with $G$ is again a left reflection.
We noted that left reflections, as defined here, are affine reflections. It was Martini's and Spirova's aim to investigate how the concept of left reflections fits into the geometry of (special) normed planes. They started with the following observation.

For any $y\neq 0$  in a strictly convex Minkowski plane there exists a unique direction $x$ with $x \bot_B y$. For any $x\neq  0$ in a smooth Minkowski plane, there exists a unique direction $y$ such that $x \bot_B y$ (cf. \cite{james 1}, \cite{james 2}). Clearly, in affine geometry the notion of direction is given by a class of parallel lines represented by a respective vector.

We notice also that all results obtained for this type of left reflections can be analogously derived for the correspondingly defined concept of right reflections in lines for smooth Minkowski planes. The observation above guarantees that also this type of transformation is well defined, and that for smooth Minkowski planes, we may replace in (ii) the condition $<p,p'>\bot_B G$ by the condition $G\bot_B <p,p'>$, and so all results derived for left reflections in strictly convex planes hold analogously for right reflections in smooth planes; see \cite{martini-spirova}. Moreover, in strictly convex as well as smooth Minkowski planes any left reflection is also a right-reflection if and only if $(V, \|\cdot\|)$ is a Radon plane; see [\cite{martini-swanepoel 1}, Section 6.1.2]).

We now give a collection of important results on left reflections from \cite{martini-spirova-strambach} and \cite{martini-spirova}.

\begin{itemize}

\item Let $\Psi_G$ be a left reflection in a strictly convex Minkowski plane $(V, \|\cdot\|)$. Then $\Psi_G$ is involutory and is affine. Moreover it has the following properties.

(i) If $H$ is an arbitrary line and $H^{\Psi_G} = H'$, then either $H \| H' \| G$ or $ H\cap H'\in G$.

(ii) The only fixed lines of $\Psi_G$ except for $G$ are those that are Birkhoff orthogonal to $G$, i.e., the lines Birkhoff orthogonal of $G$ form the direction of $\Psi_G$.

\item The product of two left reflections in parallel lines of a strictly convex Minkowski plane is a translation.

\item  The product of three left reflections in parallel lines in a strictly convex Minkowski plane $(V, \|\cdot\|)$ is a left reflection in another line belonging to the same pencil of parallel lines.

\item Let $G_1$ and $G_2$ be two (not necessarily distinct) parallel lines in a strictly convex Minkowski plane $(V, \|\cdot\|)$. Let $p$ be an arbitrary non-zero vector of $(V, \|\cdot\|)$ and $\Theta_p$ be the translation through $p$. If $\Psi_{G_1}$ and $\Psi_{G_2}$ are
the reflections in $G_1$ and $G_2$, then $\Psi_{G_2} \circ  \Theta_p \circ  \Psi_{G_1}$ is a translation.

\item If every left reflection in a strictly convex, smooth Minkowski plane preserves Birkhoff orthogonality, then it is a Radon plane.

\item A strictly convex and smooth Minkowski plane $(V, \|\cdot\|)$ is Euclidean if, and only if, for arbitrary three distinct lines $G_1$,$G_2$, and $G_3$ having a common point $p$ there exists a line $G_4$ passing through $p$ such that
$\Psi_{G_3}\circ \Psi_{G_2} \circ \Psi_{G_1} = \Psi_{G_4}$. (Here the assumption "smooth" can be replaced by the assumption "Radon plane" (Th. 4.4 in \cite{martini-spirova-strambach}).

\item A strictly convex Minkowski plane is Euclidean if and only if the left reflections generate a proper closed subgroup of the equiaffine group.

\item A strictly convex Minkowski plane is Euclidean if and only if the image of any circle with respect to any left reflection is also a circle.

\item If every left reflection in a strictly convex Minkowski plane preserves James orthogonality, then it is a Euclidean plane.

\item A strictly convex Minkowski plane that is smooth or a Radon plane is Euclidean if and only if for any two intersecting lines $G_1$ and $G_2$ and an arbitrary line $G_1'$ passing through $\{p\}=G_1\cap G_2$ there exists a line $G_2'$ through $p$ such that $\Phi_{G_2',G_1'}=\Phi_{G_2,G_1}$.

\item In a strictly convex Minkowski plane every product of two left reflections is an isometry if and only if the plane is Euclidean.
\end{itemize}

We note that the concept of left (affine) reflections without any hardness can be extracted for higher dimensions and a possible research problem is to investigate statements analogous to the mentioned above. For example the definition of a left reflection in a strictly convex Minkowski $n$-space could be the following:
\begin{defi} Given a hyperplane $G$ in a strictly convex Minkowski space $(V,\|\cdot \|)$, we define a transformation
$$
\Psi_G:V\rightarrow V \quad  \Psi_G(p)=p'
$$
to be a left reflection in the hyperplane $G$ if

 (i) $p'= p$ holds for all $p\in G$,

 (ii) $p'\neq p$ and $\langle p,p'\rangle\bot_B G$ hold for all points $p\in V\setminus G$, and

 (iii) the midpoint of the segment $[p, p']$ lies on $G$ for all points $p\in V$.
\end{defi}
From the definition it is obvious that a left reflection is an involutory mapping. On the other hand, by the definition of Minkowski distance it is also clear that it sends a $k$-dimensional subspace into a $k$-dimensional one and ideal points into ideal ones, respectively. This means that it is an affinity of the space. Hence if the origin is on the hyperplane $G$ then it is also a linear mapping.


\begin{thebibliography}{99}

\bibitem{alsina} Alsina, C., Sikorska, M., Santos Tom\'as, M.: {\em Norm Derivatives and Characterizations of Inner Product Spaces} World Scientific, Hackensack, NJ, (2009)

\bibitem{auerbach} Auerbach, H.: Sur une propi\'ert\'e caract\'eristique de l'ellipsoide. {\em Studia Math.} {\bf 9} (1940) 17--22.

\bibitem{bachmann} Bachmann, F.: {\em Aufbau der Geometrie aus dem Spiegelungsbegriff.} Springer, Berlin (1959)

\bibitem{busemann} Busemann, H., Kelly, P.: {\em Projective Geometry and Projective Metrics.} Academic Press Inc., New York (1953)

\bibitem{dragomir} Dragomir, S.S.: {\em Semi-Inner Products and Applications} Nova Science Publishers, Inc., Hauppauge, NY, (2004)

\bibitem{fleming-jamison} Fleming, R. J., Jamison, J. E.: Adjoint abelian operators on $L^P$ and $C(K)$. {\em Transactions of the American Math. Soc.} {\bf 217} (1976) 87--98.

\bibitem{garcia} Garcia-Roig, J.-L.: On the group of isometries and a system of functional equations of a real normed plane. {\em Inner Product Spaces and Applications,  Pitman Research Notes in Mathematics Series 376. Longman, Harlow} (1997), 42–53.

\bibitem{giles} Giles, J. R.: Classes of semi-inner-product spaces. {\em Trans. Amer. Math. Soc.} {\bf 129} (1967), 436--446.

\bibitem{giles 2} Giles, J. R.: On a characterization of differentiability of the norm of the normed linear space.{\em J. Austral. Math. Soc.} {\bf 12} (1971), 106–114.

\bibitem{gho 1} G. Horv\'ath, \'A.: Semi-indefinite inner product and generalized Min\-kows\-ki spaces. {\em Journal of Geometry and Physics} {\bf 60} (2010) 1190-1208.

\bibitem{gho 2} G. Horv\'ath, \'A.:  Premanifolds. {\em Note di Matematica} {\bf 31/2} (2011) 17--51.

\bibitem{gho 3} G. Horv\'ath, \'A.: Generalized Minkowski space with changing shape. {\em  Aequationes Mathematicae} {\bf 87/3} (2014), 337-377.

\bibitem{gruber} Gruber, P.M.: Minimal ellipsoids and their duals. {\em Rend. Circ. Mat. Palermo} {\bf 37/2} (1988) 35--64.

\bibitem{james 1} James, R.C.: Orthogonality in normed linear spaces. {\em Duke Math. J.} {\bf 12}, (1945) 291–301.

\bibitem{james 2} James, R.C.: Orthogonality and linear functionals in normed linear spaces. {\em Trans. Am. Math. Soc.} {\bf 61}, (1947) 291–301.

\bibitem {koehler} Koehler D. O.: A note on some operator theory in certain semi-inner-product spaces. {\em Proc. Amer. Math. Soc.} {\bf 30} (1971), 363--366.

\bibitem {koehler-rosenthal} Koehler, D., Rosenthal, P.: On isometries of normed linear spaces. {\em Studia Math.} {\bf 36} (1970), 215--218.

\bibitem{langi} L\'angi, Zs.: On diagonalizable operators in Minkowski spaces with the Lipschitz property.   {\em Linear Algebra and its Applications} {\bf 433/11-12} (2010) 2161-2167.

\bibitem {lumer} Lumer, G.: Semi-inner product spaces. {\em Trans. Amer. Math. Soc.} {\bf 100} (1961), 29-43.

\bibitem{martini-spirova} Martini, H., Spirova, M.: Reflections in strictly convex Minkowski planes. {\em Aequationes Math.} {\bf 78}, 71–85 (2009)

\bibitem {martini-spirova-strambach} Martini, H., Spirova, M., Strambach, K.: Ge\-o\-met\-ric al\-gebq-ra of strictly convex Minkowski planes. {\em  Aequationes Mathematicae} (2013) DOI: 10.1007/s00010-013-0204-z

\bibitem {martini-swanepoel 1} Martini, H., Swanepoel, K. J., Weiss, G.: {\em The geometry of Minkowski spaces - a survey. Part I.} Expositiones
Mathematicae {\bf 19} (2001), 97-142.

\bibitem {martini-swanepoel 2} Martini, H., Swanepoel, K.J.: Antinorms and radon curves. {\em Aequationes Math.} {\bf 71},(2006)  110–138.

\bibitem{mazur} Mazur, S., \"Uber konvexe Mengen in linearen normierten R\"aumen.  {\em Studia Math.} {\bf 4} (1933) 70--84.

\bibitem{minkowski} Minkowski, H.,: {\em Raum und Zeit} Jahresberichte der Deutschen Ma\-the\-ma\-tik\-er-Vereinigung, Leipzig, 1909.

\bibitem{stampfli} Stampfli, J. G.: Adjoint abelian operators on Banach space. {\em Canadian J. Math.} {\bf 21}, (1969), 505--512.

\bibitem{thompson} Thompson, A.C.: {\em Minkowski Geometry. Encyclopedia of Mathematics and its Applications, vol. 63.} Cambridge University Press, Cambridge (1996)

\bibitem{veblen} Veblen, O., Young, J.W.: {\em Projective Geometry, vol. 2.} Blaisdell Publishing Co./Ginnand Co., New York/Toronto (1965)

\bibitem {wojcik} W\'ojcik, P.: Self-adjoint operators on real Banach spaces. {\em Nonlinear Analysis} {\bf 81}, (2013), 54--61.
\end{thebibliography}
\end{document}